\documentclass{amsart}
\usepackage[hmarginratio=1:1]{geometry}

\usepackage{amsmath,amssymb,mathrsfs}

\usepackage{amsthm}

\usepackage{xcolor}
\definecolor{darkred}{rgb}{0.75,0,0}
\def\customcitecolor{darkred}
\def\customlinkcolor{darkred}
\usepackage[%
    colorlinks,
    citecolor=\customcitecolor,%
    linkcolor=\customlinkcolor,%
    urlcolor=\customlinkcolor
]{hyperref}

\usepackage{enumitem}

\usepackage{geometry}
\usepackage{microtype}

\usepackage{etoc}
\newcommand{\secttoc}{%
    \etocsettocstyle{%
        \par\medskip\noindent\textbf{\contentsname.}\par\smallskip
    }{\medskip}%
    \etocsetnexttocdepth{subsection}%
    \localtableofcontents
}

\usepackage{quiver}

\usepackage{color}

\ifdefined\skipbibliography
\else
    \usepackage[style=alphabetic]{biblatex}
    \DeclareFieldFormat{postnote}{\mknormrange{#1}}
    \DeclareFieldFormat{volcitepages}{\mknormrange{#1}}
    \DeclareFieldFormat{multipostnote}{\mknormrange{#1}}
\fi

\usepackage{mathtools}

\usepackage{cleveref}

\providecommand{\xto}[1]{\xrightarrow{#1}}

\theoremstyle{definition}
\newtheorem{theorem}{Theorem}[section]

\AddToHook{env/conjecture/begin}{\crefalias{theorem}{conjecture}}

\newtheorem{corollary}[theorem]{Corollary}
\AddToHook{env/corollary/begin}{\crefalias{theorem}{corollary}}

\newtheorem{definition}[theorem]{Definition}
\AddToHook{env/definition/begin}{\crefalias{theorem}{definition}}

\AddToHook{env/terminology/begin}{\crefalias{theorem}{terminology}}

\newtheorem{example}[theorem]{Example}
\AddToHook{env/example/begin}{\crefalias{theorem}{example}}

\AddToHook{env/note/begin}{\crefalias{theorem}{note}}

\newtheorem{lemma}[theorem]{Lemma}
\AddToHook{env/lemma/begin}{\crefalias{theorem}{lemma}}

\newtheorem{notation}[theorem]{Notation}
\AddToHook{env/notation/begin}{\crefalias{theorem}{notation}}

\newtheorem{proposition}[theorem]{Proposition}
\AddToHook{env/proposition/begin}{\crefalias{theorem}{proposition}}

\AddToHook{env/question/begin}{\crefalias{theorem}{question}}

\newtheorem{remark}[theorem]{Remark}
\AddToHook{env/remark/begin}{\crefalias{theorem}{remark}}

\AddToHook{env/slogan/begin}{\crefalias{theorem}{slogan}}

\AddToHook{env/assumption/begin}{\crefalias{theorem}{assumption}}

\newcommand\categoryfont\mathrm

\newcommand{\Et}{\categoryfont{Et}}
\newcommand{\FEt}{\categoryfont{F\acute{E}t}}
\newcommand{\Field}{\categoryfont{Field}}
\newcommand{\Fin}{\categoryfont{Fin}}

\newcommand{\Set}{\categoryfont{Set}}

\newcommand{\prelring}{\categoryfont{Pre{\lambda}Ring}}

\newcommand{\Unimod}{\categoryfont{Unimod}}

\newcommand{\op}{\mathrm{op}}

\newcommand{\fps}[1]{[\![#1]\!]}
\newcommand{\Z}{\mathbb{Z}}
\newcommand{\C}{\mathbb{C}}

\DeclareMathOperator{\GW}{GW}
\DeclareMathOperator{\Gal}{Gal}

\DeclareMathOperator{\Inv}{Inv}
\DeclareMathOperator{\KO}{KO}
\DeclareMathOperator{\Map}{Map}

\DeclareMathOperator{\Nat}{Nat}

\DeclareMathOperator{\Sym}{Sym}

\DeclareMathOperator{\tr}{tr}
\DeclareMathOperator{\Tr}{Tr}

\renewcommand\emptyset\varnothing

\NewDocumentCommand{\VW}{}{V\!\!W}

\let\epsilon\varepsilon

\providecommand{\Inv}{\mathrm{Inv}}

\title{Operations on $G$-sets and the Dress Map}
\author{Thomas Brazelton}
\author{Ben Spitz}

\begin{document}
\begin{abstract}
    We study versions of binomial and multichoose coefficients for finite $G$-sets, extending a result of Siebeneicher from the 1970s concerning the duality between these operations. If the group $G$ comes from a finite Galois extension of fields, the Dress map provides a bridge from $G$-sets to quadratic forms over the base field. We show that this Dress map is compatible with binomial coefficients of finite $G$-sets and a particular operation on quadratic forms we define. This allows us to extend the definition of Brugall\'e and Wickelgren's quadratically enriched binomial coefficients to arbitrary Grothendieck--Witt classes. This allows for a new proof that quadratically enriched binomial coefficients depend only on the trace form of the inputted \'etale algebra, as well as an elegant reproof of a result of Chen and Wickelgren about binomial coefficients over finite fields.
\end{abstract}

\maketitle

\noindent{\;\;\;\;\small\textbf{Contents}\;\hrulefill\;\textbf{Page}}
\contentsline {section}{\numberline {1}\;\;\;\;Operations on $G$-sets}{\pageref{sec:operations}}{section.1}%
\contentsline {section}{\numberline {2}\;\;\;\;The Dress Map}{\pageref{sec:dress}}{section.2}%
\hrulefill

\section*{Introduction}

Central to the combinatorial theory of finite sets are the operations of ``choosing without replacement'' and ``choosing with replacement''. Formulaically, we know these operations by the name of binomial coefficients and multiset coefficients, respectively. These operations are dual in a formal sense, and underlie a number of other dualities in mathematics, including the duality between symmetric and exterior powers (of vector spaces, representations, etc.). These operations can also be explored in the theory of \emph{equivariant combinatorics}, i.e. the combinatorial theory of finite $G$-sets (or more generally, combinatorial structures with $G$-action) where $G$ is a (finite) group. In the equivariant setting, the aforementioned duality is more subtle, and was well-studied in the 20th century \cite{Knutson,Siebeneicher,Laitinen}. In Section 1, we introduce this theory and its connections with contemporary ideas in equivariant algebra.

In Section 2, we turn our attention to the \emph{Dress map}, which, for a finite $G$-Galois extension of fields $L/k$, is a morphism of $E_\infty$-rings from the $G$-fixed points of the $G$-equivariant sphere spectrum to the Hermitian $K$-theory spectrum of $k$:
\[D_{L/k} : \mathbb{S}_G^G \to \KO(k).\]
We construct this map at the categorical level, and obtain $D_{L/k}$ by applying $K$-theory. Dress first developed and explored the properties of this map on $\pi_0$, where its domain is the \emph{Burnside ring} $A(G) = \pi_0(\mathbb{S}_G^G)$ and its codomain is the \emph{Grothendieck--Witt ring} $\GW(k) = \pi_0 \KO(k)$. The ring map $\pi_0(D_{L/k})\colon A(G) \to \GW(k)$ is an important fixture in the connection between equivariant and motivic homotopy theory \cite{hellerGaloisEquivarianceStable2015,hellerStableGaloisCorrespondence2018}.

Recent work in the theory of quadratically enriched Gromov--Witten theory has led to interest in structures on $\GW(k)$ akin to binomial coefficients \cite{ChenWickelgren,BrugalleWickelgren}. We exhibit $\pi_0 D_{L/k}$ as a pre-$\lambda$ ring homomorphism, between the ``binomial'' pre-$\lambda$ ring structure on the Burnside ring, and a new pre-$\lambda$ ring structure on the Grothendieck--Witt ring we define. As an immediate corollary, we provide a new proof that quadratically enriched binomial coefficients are invariants only of the trace form of an \'etale $k$-algebra, not of the \'etale $k$-algebra itself. This allows for a lightweight reproof of Theorem 1.1 in \cite{ChenWickelgren}, and a new basis for the ring of Witt invariants in which to express Welschinger--Witt invariants of the projective plane (\Cref{thm:Bjs-basis-of-inv-gw}).

\subsection*{Acknowledgements}

Thanks to Jesse Pajwani and Anubhav Nanavaty for helpful conversations regarding motivic Euler characteristics. Thanks to Kirsten Wickelgren for providing crucial references and feedback on a draft of this article. Thanks to the Reed College REU, Ang\'elica Osorno, and Kyle Ormsby for hosting the authors on a visit where some of this research was performed. The authors also thank Kyle Ormsby for suggesting \Cref{def:bt-power-structure} and Ang\'elica Osorno and Mike Mandell for helpful references on $K$-theory. The first named author was partially supported by NSF award
DMS-230324.

\section{Operations on \texorpdfstring{$G$}{G}-sets}\label{sec:operations}
\secttoc

Combinatorics arises from the attempt to quantify various operations that can be carried out on finite sets of objects: for instance choosing $k$ objects from a set of $n$ objects, allowing or disallowing repeats. There is a formal duality between these two procedures, encoded very elegantly in a duality between elementary symmetric and elementary homogeneous symmetric polynomials.

When the finite sets in question are equipped with additional symmetry coming from an action of a finite group, this duality is more mysterious and less understood. Just as finite sets create the integers by taking isomorphism classes and group completing, finite $G$-sets create a commutative ring called the \emph{Burnside ring}, denoted $A(G)$. Exterior and symmetric operations of finite $G$-sets are each naturally framed as an additional structure on the Burnside ring, known as a \emph{pre-$\lambda$ structure}. These more general operations originate in algebraic $K$-theory and representation theory, where they are intended to mimic operations like binomials or symmetric powers \cite{Grothendieck-Chern,SGA6,AtiyahTall}. We situate the exterior and symmetric duality in the general language of pre-$\lambda$ rings, focusing on the Burnside ring of a finite group. Our main result in this section (\Cref{thm:Sieb-extension}) is an extension of Siebeneicher's theorem (\Cref{thm:Siebeneicher}) which explores for which finite groups the exterior and symmetric pre-$\lambda$ structures on the Burnside ring $A(G)$ are dual to one another.

\subsection{The Burnside Ring}
For a finite group $G$, we denote by $\Fin_G$ the category of finite $G$-sets and equivariant functions between them. This has a coproduct given by disjoint union, and a product given by cartesian product. If we take isomorphism classes of objects, we obtain a semiring $(\pi_0\Fin_G^\simeq, \amalg, \times)$. Group completing yields the \emph{Burnside ring} of $G$, denoted $A(G):=K_0(\pi_0\Fin_G^\simeq, \amalg, \times)$.

To be explicit, an element of the Burnside ring of $G$ is a formal difference of isomorphism classes of finite $G$-sets. For convenience, we will not usually distinguish between a finite $G$-set and its isomorphism class -- for example, we may truthfully write $G/e \in A(G)$. When necessary for clarity, we will use square brackets to denote isomorphism classes, e.g. we may also write $-3[G/e] \in A(G)$.

For any subgroup $H\le G$, we obtain a \emph{mark homomorphism}, which is a ring homomorphism
\begin{align*}
    \phi^H \colon A(G) & \to \Z,
\end{align*}
defined by sending a finite $G$-set $X$ to the number $\lvert X^H \rvert$ of $H$-fixed points of $X$. As $H$ varies over conjugacy classes of subgroups of $G$, the mark homomorphisms jointly determine the Burnside ring. More explicitly:

\begin{proposition}[{\cite[5.1]{Dress-Bielefeld}}] Let $G$ be a finite group, and $C(G)$ the set of conjugacy classes of subgroups of $G$. Then
    \[
        \phi \coloneq (\phi^H)_{[H] \in C(G)} \colon A(G) \to \prod_{[H]\in C(G)}\Z
    \]
    is an injective ring homomorphism. Moreover, $\phi$ is a rational isomorphism.
\end{proposition}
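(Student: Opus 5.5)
We prove three things in turn: that $\phi$ is a ring homomorphism, that it is injective, and that it becomes an isomorphism after tensoring with $\mathbb{Q}$.

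\textbf{Ring homomorphism.} Each $\phi^H$ is additive and multiplicative on finite $G$-sets, because
\[
(X\amalg Y)^H = X^H\amalg Y^H \quad\text{and}\quad (X\times Y)^H = X^H\times Y^H.
\]
It also sends the point $G/G$ to $1$. It depends only on isomorphism classes and only on the conjugacy class of $H$, since $X^{gHg^{-1}} = gX^H$. By the universal property of group completion, $\phi^H$ therefore extends to a ring homomorphism on $A(G)$, and so does the product map $\phi$.

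\textbf{Injectivity.} First, $A(G)$ is free abelian on the orbits $G/K$, one for each $[K]\in C(G)$. This holds because every finite $G$-set decomposes uniquely up to isomorphism as a disjoint union of orbits, and $G/K\cong G/K'$ exactly when $K$ and $K'$ are conjugate. So $A(G)$ and the target $\prod_{[H]\in C(G)}\Z$ are free abelian groups of the same finite rank $|C(G)|$.

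Next, consider the matrix of marks $M_{H,K} = |(G/K)^H|$. A coset $gK$ is fixed by $H$ if and only if $g^{-1}Hg\le K$. Hence $M_{H,K}\neq 0$ only if $H$ is subconjugate to $K$. Order $C(G)$ compatibly with subconjugacy, say by increasing order of representatives. Then $M$ is triangular. Its diagonal entries are
\[
M_{K,K} = |N_G(K)/K| \neq 0.
\]
So $M$ has nonzero determinant over $\Z$, and $\phi$ is injective.

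\textbf{Rational isomorphism.} Since $\det M\neq 0$, the matrix $M$ is invertible over $\mathbb{Q}$. Hence $\phi\otimes\mathbb{Q}$ is an isomorphism.

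The main obstacle is the triangularity step. It needs the subconjugacy criterion for $(G/K)^H\neq\emptyset$ together with a consistent choice of total order on $C(G)$ refining subconjugacy; ordering by $|K|$ works. The remaining steps are formal.
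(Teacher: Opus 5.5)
Your proof is correct. The paper gives no proof of its own here (it just cites Dress). Your argument is the standard one: the table of marks $|(G/K)^H|$ is triangular with respect to subconjugacy, and its diagonal entries $|N_G(K)/K|$ are nonzero. This is the same triangularity the paper itself uses in the proof of \Cref{prop:normalized-aut-preserves-GC-order-two}.
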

As an abelian group, the Burnside ring is free of rank equal to the size of $C(G)$. As a ring, we understand its prime ideals very well:

\begin{proposition}[{\cite[5.2]{Dress-Bielefeld}}]
\label{prop:prime-ideals-AG}
For any (conjugacy class of) subgroup $H\le G$, and any prime number $p$, we define the ideals
    \begin{align*}
        I_{H,0} & := \ker (A(G) \xto{\phi^H} \Z)           \\
        I_{H,p} & := \ker (A(G) \xto{\phi^H} \Z \to \Z/p).
    \end{align*}
    Then every prime ideal in $A(G)$ is of the form $I_{H,0}$ or $I_{H,p}$ for some $H$ and $p$.
\end{proposition}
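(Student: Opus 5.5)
The plan is to reduce everything to the ghost map $\phi$ from the preceding proposition. It is an injective ring homomorphism $A(G) \hookrightarrow \prod_{[H]\in C(G)} \Z$, and it becomes an isomorphism after tensoring with $\mathbb{Q}$. Since $A(G)$ is free of finite rank, the image has finite index, so the product ring is a finite $A(G)$-module, i.e. an integral extension of $A(G)$. The proof then has two steps: describe the primes of the product ring, and descend them along this integral extension.

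For the first step, the primes of $\prod_{[H]}\Z$ are easy to list. Every prime of a finite product of rings is the preimage of a prime of a single factor. Hence each prime is the preimage, under the projection $\pi_H$ onto the $H$-factor, of either $(0)$ or $(p)\subset \Z$ for some prime $p$.

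For the second step, I apply lying over to the integral extension $A(G)\subseteq \prod\Z$. Every prime $\mathfrak{p}\subset A(G)$ is then the contraction $\mathfrak{q}\cap A(G)$ of some prime $\mathfrak{q}$ of the product. Writing $\mathfrak{q}=\pi_H^{-1}(0)$ or $\pi_H^{-1}(p)$, the contraction is $\ker(\pi_H\circ\phi)=\ker\phi^H = I_{H,0}$ in the first case. In the second case it is $\ker(A(G)\xto{\phi^H}\Z\to\Z/p)=I_{H,p}$. This gives the claimed form for every prime.

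The only step needing real justification is integrality, which is the main (mild) obstacle. I would argue it as follows. Since $\phi$ is a rational isomorphism and both sides are free abelian of the same rank $|C(G)|$, the cokernel of $\phi$ is a finite group. So $\prod\Z$ is finitely generated as an $A(G)$-module (even as an abelian group), and hence integral over $A(G)$. If one prefers to avoid lying over, there is a direct alternative. Each idempotent $e_H$ of $\prod\Z$ satisfies $N e_H\in \phi(A(G))$ for $N$ the index. For a prime $\mathfrak{p}$, either $N\notin\mathfrak{p}$, and then the product of the $Ne_H$ being $0$ forces some $Ne_H\in\mathfrak p$ while the others are not; or $p\mid N$ lies in $\mathfrak p$. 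In both cases one identifies $\mathfrak{p}$ with a kernel of some $\phi^H$ mod $0$ or $p$. I would present the lying-over argument as the main proof.
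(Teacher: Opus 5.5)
The paper does not prove this proposition; it only quotes it from Dress, so there is no argument of the authors' to compare yours against. Your proof is correct and complete, and it is the standard argument. You regard $A(G)$ as a subring of the ghost ring via the injective ring map $\phi$ and note that the ghost ring is integral over it. The primes of $\prod_{[H]}\Z$ are the ideals $\pi_H^{-1}(0)$ and $\pi_H^{-1}(p\Z)$, and lying over shows that every prime of $A(G)$ is a contraction of one of these. The contractions are exactly $I_{H,0}$ and $I_{H,p}$.

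Three small remarks.

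\begin{enumerate}
\item Integrality is even cheaper than your finite-index argument. The image $\phi(A(G))$ contains the diagonal copy of $\Z$, because $\phi^H(G/G)=1$ for every $H$. Every $(a_H)\in\prod_{[H]}\Z$ is a root of the monic integer polynomial $\prod_{[H]}(T-a_H)$, so no rank comparison is needed.

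\item The alternative you sketch is not right as written. The $Ne_H$ are pairwise orthogonal and sum to $N$. So when $N\notin\mathfrak p$, it is exactly one $Ne_H$ that lies \emph{outside} $\mathfrak p$ and all the others lie inside; you have this reversed. Moreover, the case $p\mid N$ with $p\in\mathfrak p$ is never actually handled. This is the only case in which distinct $H$ can give the same $I_{H,p}$, so it is not a formality. Since you present lying over as the main proof, this does not affect correctness, but the sketch should be repaired or dropped.

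\item If you also want the implicit converse, that each $I_{H,0}$ and $I_{H,p}$ really is prime, it follows from surjectivity of $\phi^H$, again because $\phi^H(G/G)=1$. This gives $A(G)/I_{H,0}\cong\Z$ and $A(G)/I_{H,p}\cong\Z/p$.
\end{enumerate}
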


Given a finite $G$-set $X$, we can form its permutation representation. This yields the \emph{linearization map}
\[
    \theta \colon A(G) \to R_{\mathbb{C}}(G),
\]
which is in general neither injective nor surjective. The character of a permutation representation has a simple formula in terms of the mark homomorphisms:
\[\chi_{\theta(X)}(g) = \phi^{\langle g\rangle}(X),\]
where $\langle g\rangle$ is the cyclic subgroup generated by $g$. Thus we can describe the kernel of linearization in terms of marks:
\[
    \ker(\theta) = \bigcap_{\substack{C\le G \\ C\text{ cyclic}}} I_{C,0}.
\]
\subsection{Choosing \texorpdfstring{$G$}{G}-sets}
Let $X$ be a finite $G$-set and $k$ a natural number. Then we may consider the set of all $k$-element subsets of $X$, which we denote $\binom{X}{k}$, i.e.
\[\binom{X}{k} = \{S \subseteq X : \lvert S \rvert = k\}.\]
This has a natural $G$-action, given by
\[gS = \{gs : s \in S\}.\]
\begin{example}\label{ex:Ge-choose-2} Let $G[2] \subseteq G$ be the subset of elements of order $\le 2$. Then we have an isomorphism of $G$-sets:
    \begin{align*}
        \binom{G/e}{2} \cong \coprod_{i=1}^{ \frac{\#G - \#G[2]}{2}} G/e \amalg \coprod_{\substack{C\le G \\ |C|=2}}G/C.
    \end{align*}
\end{example}
\begin{proof} Given an unordered tuple $\{x,y\}$ of two elements in $G$, we can inquire about its stabilizer. If $g\in G$ satisfies $g\{x,y\} = \{x,y\}$, we find ourselves in one of two positions:
    \begin{enumerate}
        \item $gx=x$ and $gy=y$ so $g=e$,
        \item $gx=y$ and $gy=x$, so $g=yx^{-1}$ is 2-torsion.
    \end{enumerate}
    From this observation the result follows.
\end{proof}

We similarly have an analogue of ``$n$ choose $k$ with multiplicity,'' given by the symmetric power $X^{\times k}/\Sigma_k$.

\begin{notation}\label{nota:exterior-symmetric-on-AG}
    Given a finite $G$-set $X$ and some $k\ge 0$, we denote by $\epsilon$ and $\sigma$ the \emph{exterior} and \emph{symmetric} operations:
    \begin{align*}
        \epsilon^k(X) & = \binom{X}{k}           \\
        \sigma^k(X)   & = X^{\times k}/\Sigma_k.
    \end{align*}
    Observe when $k=0$, we obtain the singleton set $G/G$.
\end{notation}

On the representation ring $R(G)$, we similarly have exterior and symmetric operations, given by taking exterior and symmetric powers of representations, which we denote by $\epsilon$ and $\sigma$ as well.

\begin{proposition} Given a finite $G$-set $X$ and some $k\ge 0$, we have that linearization commutes with symmetric powers:
    \[
        \theta(\sigma^k(X)) \cong \sigma^k(\theta(X)),
    \]
    however linearization \emph{need not} commute with exterior powers.
\end{proposition}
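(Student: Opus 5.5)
For the positive statement, I will identify $\theta(\sigma^k(X))$ and $\sigma^k(\theta(X))$ as permutation representations on the same $G$-set. Write $\C[X]$ for the permutation representation of $X$. Then $\C[X]^{\otimes k}\cong \C[X^{\times k}]$ $G$-equivariantly, with basis the tensors $e_{x_1}\otimes\cdots\otimes e_{x_k}$. The $\Sigma_k$-action permuting tensor factors corresponds to permuting coordinates of $X^{\times k}$, and it commutes with the diagonal $G$-action.

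The symmetric power $\Sym^k\C[X]$ is the $\Sigma_k$-coinvariants of $\C[X]^{\times k}$, or equivalently (in characteristic zero) the invariants. For any finite $G\times\Sigma_k$-set $Y$, the coinvariants $\C[Y]_{\Sigma_k}$ are canonically $\C[Y/\Sigma_k]$. The basis elements identified are exactly those in the same $\Sigma_k$-orbit, and the resulting classes are linearly independent. This identification is $G$-equivariant. Taking $Y=X^{\times k}$ gives $\sigma^k(\theta(X))\cong \C[X^{\times k}/\Sigma_k]=\theta(\sigma^k(X))$. One could instead check characters via $\chi_{\theta(Z)}(g)=\phi^{\langle g\rangle}(Z)$, comparing the count of $g$-fixed multisets with the standard formula for the trace of $g$ on $\Sym^k$. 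The direct basis argument is cleaner, though, and the only care needed is confirming that coinvariants of a permutation module are the permutation module on orbits.

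For the negative statement, I will give a counterexample. Take $G=C_2$, $X=G/e$, $k=2$. Then $\epsilon^2(X)=\binom{G/e}{2}$ is a single point with trivial action (this matches \Cref{ex:Ge-choose-2} with $\#G=\#G[2]=2$ and one subgroup of order $2$), so $\theta(\epsilon^2 X)$ is the trivial representation. On the other hand, $\theta(X)$ is the regular representation $1\oplus\mathrm{sgn}$, so $\Lambda^2\theta(X)\cong 1\otimes\mathrm{sgn}=\mathrm{sgn}$. Equivalently, the generator swaps $e_1$ and $e_2$ and so sends $e_1\wedge e_2\mapsto e_2\wedge e_1=-e_1\wedge e_2$. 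The two one-dimensional representations differ, for example in their character at the generator ($1$ versus $-1$).

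The main point to explain is the source of this failure. The sign arising when basis vectors are permuted inside a wedge is invisible to the underlying $G$-set of subsets. So the exterior power is only a "signed" permutation representation, whereas the symmetric power carries no signs. This is the only subtle step; everything else is bookkeeping.
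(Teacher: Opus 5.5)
Your proof is correct, and your counterexample ($G=C_2$, $X=C_2/e$, $k=2$, trivial versus sign) is exactly the one the paper uses.

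Where you go further is the positive half. The paper gives no proof that $\theta$ commutes with symmetric powers; it only attributes the observation to Siebeneicher. Your identification $\C[Y]_{\Sigma_k}\cong\C[Y/\Sigma_k]$ for $Y=X^{\times k}$, using that the $\Sigma_k$- and $G$-actions commute, is therefore a self-contained replacement. It also gives a canonical, basis-level isomorphism of $G$-representations, rather than one inferred non-canonically from characters. (Minor typo: the coinvariants should be of $\C[X]^{\otimes k}$, not $\C[X]^{\times k}$.)

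The character route you set aside is still worth recording. It matches the mark computations the paper uses later, and it makes your ``signed permutation'' heuristic precise. Suppose $g$ acts on $X$ with cycle lengths $\ell_1,\dots,\ell_r$. Then
$\sum_k\chi_{\Sym^k\theta(X)}(g)t^k=\det(1-tg)^{-1}=\prod_i(1-t^{\ell_i})^{-1}=\phi^{\langle g\rangle}\sigma_t(X)$.
On the exterior side,
$\sum_k\chi_{\Lambda^k\theta(X)}(g)t^k=\det(1+tg)=\prod_i\bigl(1-(-t)^{\ell_i}\bigr)$,
and this equals $\phi^{\langle g\rangle}\epsilon_t(X)=\prod_i(1+t^{\ell_i})$ exactly when every $\ell_i$ is odd. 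So the exterior discrepancy comes precisely from even-length cycles, which is the same parity phenomenon behind Siebeneicher's odd-order criterion.
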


This appears to be first noted by Siebeneicher in \cite{Siebeneicher}. A simple example where we can see the failure of linearization to commute with exterior products is the case of $C_2/e$ --- the permutation representation of $\epsilon^2(C_2/e)$ is trivial, but $\wedge^2 \C[C_2/e]$ is the sign representation.

The operations $\epsilon^k$ and $\sigma^k$ can be defined for virtual $G$-sets and virtual representation as well, hence they extend to functions from the Burnside ring or representation ring to itself. The collection of such operations is formalized by the idea of a \emph{pre-$\lambda$ structure}.

\subsection{Pre-\texorpdfstring{$\lambda$}{λ} Structures}

Pre-$\lambda$ structures were first defined by Grothendieck in his work on Chern classes, as a way to encode the common algebraic structures enjoyed both by $K$-groups and representation rings \cite[p.~148]{Grothendieck-Chern}. They further appeared in work of Atiyah and Tall, where they found connections to Adams operations and the $J$-homomorphism \cite[\S1]{AtiyahTall}.

\begin{definition}[{\cite[p.7]{Knutson}}] A \emph{pre-$\lambda$ ring} is a commutative unital ring $A$ together with a series of operations
    \begin{align*}
        \lambda^n\colon A \to A,
    \end{align*}
    for $n\ge 0$, satisfying
    \begin{enumerate}[label=(PL\arabic*)]
        \item $\lambda^0(x) = 1$
        \item $\lambda^1(x) = x$
        \item $\lambda^n(x+y) = \sum_{j=0}^n \lambda^j(x) \lambda^{n-j}(y)$.
    \end{enumerate}
    Rather than a ton of functions valued in $A$, we can repackage the entire data of the pre-$\lambda$ structure as a single function valued in formal power series. Explicitly, we define $\lambda_t$ as the following:
    \begin{equation}\label{eqn:lambda-t}
        \begin{aligned}
            \lambda_t \colon A & \to A\fps{t}                         \\
            x                  & \mapsto \sum_{n\ge0}\lambda^n(x)t^n.
        \end{aligned}
    \end{equation}
    With this new notation in mind, it is easy to see that PL3 can be rephrased as
    \begin{enumerate}[label=(PL\arabic*),start=3]
        \item $\lambda_t(x+y) = \lambda_t(x)\lambda_t(y)$.
    \end{enumerate}
\end{definition}

\begin{example} The following rings and $\lambda_t$'s define pre-$\lambda$ structures:
    \begin{enumerate}
        \item $\lambda_t(n) = (1+t)^n$ in $\Z\fps{t}$. This is called the \emph{binomial} pre-$\lambda$ structure.
        \item Given any formal power series $f(t)\in 1 + t\Z\fps{t}$, we have that $\lambda_t(n) = f(t)^n$ defines a pre-$\lambda$ structure on $\Z$.
        \item $\lambda_t(x) = e^{tx} \in \mathbb{R}\fps{t}$
    \end{enumerate}
\end{example}

\begin{definition}\label{def:category-pre-lambda} A \emph{homomorphism of pre-$\lambda$ rings} $f \colon (R,\lambda) \to (S, \mu)$ is a ring homomorphism $f \colon R \to S$ so that $f(\lambda^n(x)) = \mu^n(f(x))$ for any $n\ge0$ and any $x\in R$. This allows us to define a category $\prelring$.
\end{definition}

Over every commutative ring, pre-$\lambda$ structures come in pairs, given by the structure $\lambda$ and what we will call its \emph{dual} $\lambda^\star$, to be defined below. While this duality is well-known to experts (cf.~\cite[p.~412]{Hazewinkel-Witt}), we find it useful to spell out the key properties, in particular how it interchanges exterior and symmetric structures.

\begin{definition}\label{def:dual-pre-lambda-structure} Let $(A,\lambda_t)$ be any pre-$\lambda$ ring. We define the \emph{dual} pre-$\lambda$ structure $\lambda_t^\star$ on $A$ by the formula
    \begin{align*}
        (\lambda^\star)^n(x) = (-1)^n \lambda^n(-x)
    \end{align*}
    for any $x\in A$ and $n\ge0$. We remark that $\lambda_t^\star$ can be thought of as
    \begin{equation}\label{eqn:lambda-t-star-formulations}
        \begin{aligned}
            \lambda_t^\star(x) = \lambda_{-t}(-x) = \lambda_{-t}(x)^{-1},
        \end{aligned}
    \end{equation}
    where $\lambda_{-t}(y)$ is shorthand for $\lambda_t(y) \mid_{t \mapsto -t}$.
\end{definition}

\begin{proposition}\label{prop:star-duality} The assignment
    \begin{align*}
        (A,\lambda_t) \mapsto (A,\lambda_t^\star)
    \end{align*}
    defines a functor $\prelring \to \prelring$ which preserves underlying ring homomorphisms. This functor squares to the identity.
\end{proposition}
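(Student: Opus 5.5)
We have to check three things. First, that $\lambda^\star$ really is a pre-$\lambda$ structure. Second, that a homomorphism for $\lambda,\mu$ is also one for $\lambda^\star,\mu^\star$. Third, that $\lambda^{\star\star}=\lambda$. Everything rests on the two descriptions of $\lambda_t^\star$ in \eqref{eqn:lambda-t-star-formulations}, so we justify those first.

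The equality $\lambda_t^\star(x)=\lambda_{-t}(-x)$ is just the definition read coefficientwise: the coefficient of $t^n$ on the right is $(-1)^n\lambda^n(-x)$. For the second equality we use (PL3) with $y=-x$ together with (PL1). These give $\lambda_t(x)\lambda_t(-x)=\lambda_t(0)$. Applying (PL3) to $0=0+0$ gives $\lambda_t(0)=\lambda_t(0)^2$. Since $\lambda_t(0)$ has constant term $1$ by (PL1), it is a unit in $A\fps{t}$, and hence $\lambda_t(0)=1$. So $\lambda_t(-x)=\lambda_t(x)^{-1}$. Substituting $t\mapsto -t$, which is a ring endomorphism of $A\fps{t}$, gives $\lambda_{-t}(-x)=\lambda_{-t}(x)^{-1}$.

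Now the axioms for $\lambda^\star$. (PL1) holds because $(\lambda^\star)^0(x)=\lambda^0(-x)=1$. (PL2) holds because $(\lambda^\star)^1(x)=-\lambda^1(-x)=x$. For (PL3) we compute
\[\lambda^\star_t(x+y)=\lambda_{-t}(-x-y)=\lambda_{-t}(-x)\,\lambda_{-t}(-y)=\lambda_t^\star(x)\,\lambda_t^\star(y),\]
where the middle step is (PL3) for $\lambda$ followed by the substitution $t\mapsto -t$.

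For functoriality, the functor sends a morphism to the same underlying ring map. Let $f$ be a ring homomorphism commuting with $\lambda^n$ and $\mu^n$. Then
\[f((\lambda^\star)^n(x))=(-1)^n f(\lambda^n(-x))=(-1)^n\mu^n(-f(x))=(\mu^\star)^n(f(x)),\]
using that $f$ is additive, so $f(-x)=-f(x)$. Identities and composites are preserved trivially, since the underlying maps are unchanged.

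Finally, the involution: $(\lambda^{\star\star})^n(x)=(-1)^n(-1)^n\lambda^n(--x)=\lambda^n(x)$. So the functor squares to the identity on objects, and therefore on morphisms as well.

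The only step that is not pure bookkeeping is the identity $\lambda_t(0)=1$. It is needed for the second formulation in \eqref{eqn:lambda-t-star-formulations}, but the core axioms do not depend on it.
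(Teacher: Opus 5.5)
Your proof is correct and follows essentially the same route as the paper's: verify (PL1)--(PL3) for $\lambda^\star$, check that the same underlying ring map intertwines the dual structures, and observe involutivity; your generating-function computation of (PL3) is just the paper's coefficientwise sum written compactly. The differences are minor: you check functoriality coefficientwise via $(-1)^n\lambda^n(-x)$, whereas the paper uses the inverse formulation $\lambda_{-t}(r)^{-1}$, and you explicitly justify $\lambda_t(0)=1$ (hence the second equality in \eqref{eqn:lambda-t-star-formulations}), which the paper takes for granted.
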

\begin{proof}
    We first check $\lambda_t^\star$ is indeed a pre-$\lambda$ ring structure. It is immediate to see that PL1 and PL2 hold. To check PL3 it is easier to use the first characterization:
    \begin{align*}
        (\lambda^\star)^n(x+y) & = (-1)^n \lambda^n(-x-y) = (-1)^n\sum_{j=0}^n \lambda^j(-x) \lambda^{n-j}(-y) \\
                             & = \sum_{j=0}^n (-1)^j \lambda^j(-x) (-1)^{n-j}\lambda^{n-j}(-y)               \\
                             & = \sum_{j=0}^n (\lambda^\star)^j(x) (\lambda^\star)^{n-j}(y).
    \end{align*}
    It is clear that $(-)^\star$ is involutive on objects, so we just verify functoriality. For any morphism $f \colon (R,\lambda) \to (S,\mu)$ of pre-$\lambda$ rings, and any $r\in R$, we have
    \[f(\lambda^\star_t(r)) = f(\lambda_{-t}(r)^{-1}) = f(\lambda_{-t}(r))^{-1} = \mu_{-t}(f(r))^{-1} = \mu^\star_t(f(r)),\]
    as desired.
\end{proof}

\begin{example}\label{exa:duality-on-Z}
    Let $\lambda_t$ be the binomial pre-$\lambda$ structure on $\Z$. Then its dual is given by
    \begin{align*}
        (\lambda^\star)^k(n) = \binom{n+k-1}{k}.
    \end{align*}
    That is, it counts \emph{with multiplicity}.
\end{example}
\begin{proof} As $(\lambda^\star)^k(n) = (-1)^k \lambda^k(-n)$, we just have to determine what is meant by the binomial of a negative number in the binomial pre-$\lambda$ structure. By PL1 and PL3, we have, for $k\ge 1$, that
    \begin{align*}
        0 =\lambda^k(n-n) = \sum_{j=0}^k \lambda^j(n) \lambda^{k-j}(-n),
    \end{align*}
    and we find inductively that $\lambda^k(-n) = (-1)^k \binom{n+k-1}{k}$.
\end{proof}

\begin{remark}
There is a stronger notion of a \emph{$\lambda$-ring}, which we will not define or use in this paper; we remark for the reader who is familiar with this definition that the duality $(-)^\star \colon \prelring \to \prelring$ does not preserve the subcategory of $\lambda$-rings. This can be seen, for instance, using uniqueness of the $\lambda$-structure on $\Z$ given by binomials, together with the previous example.
\end{remark}

\begin{example}\label{exa:duality-on-RG}
    Let $R(G)$ be the complex representation ring of a group $G$, and define the \emph{exterior} pre-$\lambda$ structure by
    \begin{align*}
        \lambda^n([V]) = \left[\bigwedge^n V\right].
    \end{align*}
    Then $(\lambda^\star)^n([V]) = [\Sym^n V]$.
\end{example}
\begin{proof} By \Cref{eqn:lambda-t-star-formulations} it suffices to see that $1 = \lambda_{-t}^\star(V) \lambda_t(V)$ for any $V$. The coefficient on $n\ge1$ in the right hand side of this expression is given by
    \begin{align*}
        \sum_{j=0}^n (-1)^j \Sym^j([V]) \otimes \bigwedge^{n-j}V,
    \end{align*}
    which vanishes in the representation ring (see for instance \cite[p.31]{Knutson}).
\end{proof}

Both \Cref{exa:duality-on-Z} and \Cref{exa:duality-on-RG} touch on a duality in mathematics between \emph{choosing} and \emph{choosing with repetition}. We would like to axiomatize this a bit more in the context of pre-$\lambda$ structures as it will be useful when investigating pre-$\lambda$ structures on the Burnside ring of a group. To that end, we make the following definition (cf.~\cite[16.4]{Hazewinkel-Witt}).

\begin{definition}\label{def:exterior-symmetric} Let $\Z_{\text{binom}}\in \prelring$ denote the integers equipped with the binomial pre-$\lambda$ ring structure. Let $\Z_{\text{multi}}$ denote the integers equipped with the multiset coefficients pre-$\lambda$ structure.
    \begin{enumerate}
        \item We refer to the slice category $\prelring_{/\Z_{\text{binom}}}$ as the category of \emph{exterior} pre-$\lambda$ structures.
        \item We refer to $\prelring_{/\Z_{\text{multi}}}$ as the category of \emph{symmetric} pre-$\lambda$ structures.
    \end{enumerate}
    Spelling this out, we say a pre-$\lambda$ ring $(R,\lambda_t)$ is \emph{exterior} if it comes equipped with an augmentation (a ring homomorphism) $R \to \Z$ which is a pre-$\lambda$ ring homomorphism when $\Z$ is equipped with the binomial structure (cf.~\cite[p.~20]{Knutson}).
\end{definition}

Clearly a pre-$\lambda$ ring can be either exterior or symmetric but not both, and many pre-$\lambda$ rings may not have either property, for instance if they don't admit an augmentation. An immediate observation is the following:

\begin{proposition} The duality $(-)^\star$ restricts to an isomorphism of categories
    \begin{align*}
        (-)^\star \colon \prelring_{/\Z_{\text{binom}}} \xto{\sim} \prelring_{/\Z_{\text{multi}}}.
    \end{align*}
\end{proposition}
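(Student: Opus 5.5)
The plan is to use \Cref{prop:star-duality}, which gives an involutive functor $(-)^\star\colon \prelring \to \prelring$ preserving underlying ring maps, and to show that it carries the slice over $\Z_{\text{binom}}$ into the slice over $\Z_{\text{multi}}$ and back.

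First, I will identify $\Z_{\text{binom}}^\star = \Z_{\text{multi}}$. By \Cref{exa:duality-on-Z}, the dual of the binomial structure on $\Z$ is $(\lambda^\star)^k(n)=\binom{n+k-1}{k}$, which is by definition the multiset coefficients structure. Since $(-)^\star$ squares to the identity, it follows that $\Z_{\text{multi}}^\star = \Z_{\text{binom}}$.

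Next, I define the functor on slices. An object of $\prelring_{/\Z_{\text{binom}}}$ is a pre-$\lambda$ ring $(R,\lambda)$ with a pre-$\lambda$ homomorphism $\alpha\colon (R,\lambda)\to \Z_{\text{binom}}$. Applying the functor from \Cref{prop:star-duality} gives $\alpha^\star=\alpha\colon (R,\lambda^\star)\to \Z_{\text{binom}}^\star=\Z_{\text{multi}}$. This is again a pre-$\lambda$ homomorphism, with the same underlying ring map.

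A morphism in the slice is a pre-$\lambda$ map $f$ with $\beta\circ f=\alpha$. Its image is $f^\star = f$ as a ring map, and the commuting triangle is preserved because underlying maps are unchanged. Functoriality is inherited from \Cref{prop:star-duality}. The same construction gives a functor $\prelring_{/\Z_{\text{multi}}}\to\prelring_{/\Z_{\text{binom}}}$.

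Finally, the two composites are identities. This holds because $(-)^\star$ is strictly involutive on objects: $(\lambda^\star)^\star=\lambda$ follows from $(-1)^n(-1)^n\lambda^n(x)=\lambda^n(x)$. It is also the identity on underlying maps, so the two functors are mutually inverse and give an isomorphism of categories.

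There is no real obstacle here. The only point needing care is the identification $\Z_{\text{binom}}^\star=\Z_{\text{multi}}$, which is exactly the content of \Cref{exa:duality-on-Z}.
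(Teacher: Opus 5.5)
Your argument is correct and is the reasoning the paper leaves implicit when it calls this ``an immediate observation'' (the paper gives no written proof). The involutive functor of \Cref{prop:star-duality} fixes underlying ring maps and sends $\Z_{\text{binom}}$ to $\Z_{\text{multi}}$ by \Cref{exa:duality-on-Z}, so it induces mutually inverse functors between the two slice categories.
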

In other words, a pre-$\lambda$ ring structure is exterior if and only if its dual is symmetric.

\begin{example} The exterior and symmetric powers on the representation ring $R(G)$ are, of course, exterior and symmetric, respectively. The augmentation exhibiting this is the natural one given by taking the rank of a representation.
\end{example}

\subsection{Pre-\texorpdfstring{$\lambda$}{λ} Structures on the Burnside Ring}
Recall the Burnside ring had these exterior and symmetric pre-$\lambda$ structures defined in \Cref{nota:exterior-symmetric-on-AG}. Linearization defines a pre-$\lambda$ ring homomorphism
\[
    \theta \colon (A(G),\sigma_t) \to (R(G),\sigma_t),
\]
therefore we can deduce the existence of two new pre-$\lambda$ structures from the failure of linearization to preserve exterior powers:
\begin{itemize}
    \item there is a \emph{dual exterior} pre-$\lambda$ structure $\epsilon_t^\star$ on $A(G)$,
    \item there is a \emph{dual symmetric} pre-$\lambda$ structure $\sigma_t^\star$ on $A(G)$.
\end{itemize}
These four pre-$\lambda$ structures $\epsilon_t$, $\epsilon_t^\star$, $\sigma_t$, $\sigma_t^\star$, are all different in general. The dual symmetric structure $\sigma_t^\star$ was first explored by Siebeneicher \cite{Siebeneicher}, where he proved the following theorem:

\begin{theorem}\label{thm:Siebeneicher}
    For a finite group $G$, we have that $\epsilon_t = \sigma_t^\star$ on $A(G)$ if and only if $|G|$ is odd.
\end{theorem}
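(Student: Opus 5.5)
The condition $\epsilon_t=\sigma_t^\star$ is, by \Cref{eqn:lambda-t-star-formulations}, equivalent to the identity $\epsilon_t(X)\,\sigma_{-t}(X)=1$ in $A(G)\fps{t}$ for every $X\in A(G)$. Both $\epsilon_t$ and $\sigma_{-t}$ are multiplicative in $X$ by (PL3), so the product $\epsilon_t(X)\sigma_{-t}(X)$ is additive in $X$. It therefore suffices to check the identity on the generators $X=G/H$. The mark homomorphism $\phi$ is injective, so we may further apply $\phi^K$ for each subgroup $K$ and verify an identity of integer power series. Concretely, for each $n\ge1$ and each pair $H,K$ we must show that
\[\sum_{j=0}^n(-1)^{j}\,\bigl|\epsilon^{n-j}(G/H)^K\bigr|\cdot\bigl|\sigma^{j}(G/H)^K\bigr|=0.\]

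The first step is to compute fixed points of $\binom{X}{k}$ and $X^{k}/\Sigma_k$ under $K$, after restricting $X$ to $K$. A $K$-fixed $k$-subset is a union of $K$-orbits of $X$ whose sizes sum to $k$. Write $\mathrm{res}_K X=\coprod_{d} a_d\,(\text{orbits of size }d)$; it is really a sum over orbit types $K/L$, but only the size matters here. The generating function of fixed subsets is then $\prod_{\text{orbits }O}(1+t^{|O|})$.

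A $K$-fixed multiset is a function $X\to\mathbb N$ that is constant on orbits, with total size $k$. Its generating function is $\prod_O (1-t^{|O|})^{-1}$. This uses the standard fact that fixed points of $X^k/\Sigma_k$ are the $K$-invariant multisets; one should justify that a fixed point of the orbit space corresponds to an invariant multiset, which holds because multisets are exactly the $\Sigma_k$-orbits.

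Hence $\phi^K(\epsilon_t(X)\sigma_{-t}(X))=\prod_O (1+t^{|O|})(1-(-t)^{|O|})^{-1}$. A factor equals $1$ exactly when $|O|$ is odd. When $|O|=d$ is even it is $(1+t^d)/(1-t^d)\ne1$.

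If $|G|$ is odd, every orbit size divides $|K|$ and is therefore odd. Then all marks of the product equal $1$, and by injectivity of $\phi$ we get $\epsilon_t=\sigma_t^\star$.

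Conversely, suppose $|G|$ is even. Take $X=G/e$ and $K$ a subgroup of order $2$, which exists by Cauchy's theorem. The orbits of $K$ on $G/e$ all have size $2$, so the $\phi^K$-mark is a nontrivial power series; its $t^2$ coefficient is $2\cdot|G|/2\ne0$. As a sanity check, \Cref{ex:Ge-choose-2} gives the mark of $\epsilon^2$ directly.

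The main subtlety is the careful identification of $K$-fixed points of $X^{\times k}/\Sigma_k$ with $K$-invariant multisets. Some additional care is needed in the reduction from virtual $X$ to genuine $G$-sets, which additivity handles.
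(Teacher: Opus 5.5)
Your proof is correct and complete, but it reaches the ``only if'' direction by a different route from the paper's. The paper does not prove this theorem itself; it is quoted from Siebeneicher. Your ``if'' direction rests on exactly the mark formulas $\phi^H\epsilon_t(X)=\prod_O(1+t^{\#O})$ and $\phi^H\sigma_t(X)=\prod_O(1-t^{\#O})^{-1}$ that appear in the proof of \Cref{thm:main}. Your ``only if'' direction is where you differ from the paper's argument, which is the case $f=\mathrm{id}$ of the proof of \Cref{thm:Sieb-extension}. Both arguments examine the $t^2$-coefficient of $\epsilon_t(G/e)\,\sigma_{-t}(G/e)$. The paper computes it in the basis of transitive $G$-sets: using \Cref{ex:Ge-choose-2} and $\sigma^2(G/e)=\binom{G/e}{2}+G/e$, its vanishing becomes $(\#G[2]-1)[G/e]=2\sum_{(C)}[G:N_G(C)][G/C]$, and linear independence finishes the argument. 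You instead read the coefficient off the single ghost coordinate $\phi^K$ with $|K|=2$, where it equals $|G|$. Your route is shorter and handles both directions uniformly from the one product $\prod_O(1+t^{|O|})/(1-(-t)^{|O|})$. It uses Cauchy's theorem for $p=2$ up front, but the paper needs the same fact at the end, to pass from ``no element of order two'' to ``odd order''. The paper's basis-level computation is shaped so that a normalized automorphism $f$ can be inserted, at the cost of \Cref{prop:normalized-aut-preserves-GC-order-two}. Your ghost-coordinate argument would in fact adapt even more easily. A normalized $f$ permutes marks via a bijection $\pi$ of conjugacy classes fixing $\{e\}$. So the $K$-mark of $f(\epsilon_t(G/e))\,\sigma_{-t}(G/e)$ is $(1+t^m)^{|G|/m}(1-t^2)^{-|G|/2}$ with $m=|\pi(K)|\ge 2$. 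Its $t^2$-coefficient is $|G|$ if $m=2$ and $|G|/2$ otherwise, so it is nonzero either way.
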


We strengthen this result as follows:
\begin{theorem}\label{thm:Sieb-extension}
    Let $G$ be a finite group, then the following are equivalent:
    \begin{enumerate}
        \item $(A(G),\epsilon_t)$ and $(A(G),\sigma_t^\star)$ are isomorphic in $\prelring$
        \item $|G|$ is odd.
    \end{enumerate}
\end{theorem}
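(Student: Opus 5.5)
The direction (2)$\Rightarrow$(1) follows from \Cref{thm:Siebeneicher}: if $|G|$ is odd, then $\epsilon_t=\sigma_t^\star$ on $A(G)$, so the identity of $A(G)$ is an isomorphism in $\prelring$. All the work is in (1)$\Rightarrow$(2). I will prove the contrapositive: assuming $|G|$ is even, I will build an invariant of pre-$\lambda$ rings that distinguishes $(A(G),\epsilon_t)$ from $(A(G),\sigma_t^\star)$. Note that an invariant must see more than the identity map of $A(G)$, since an abstract isomorphism $f$ need not preserve genuine $G$-sets.

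\emph{Step 1: an isomorphism permutes the marks.} Every ring map $A(G)\to\Z$ is a mark homomorphism $\phi^H$. Indeed, its kernel is a prime with residue ring $\Z$, hence of the form $I_{H,0}$ by \Cref{prop:prime-ideals-AG}, and $\phi^H$ is the only ring map to $\Z$ with that kernel. So a ring isomorphism $f$ with $f\circ\epsilon_t=\sigma^\star_t\circ f$ induces a bijection $H\mapsto H'$ on $C(G)$ with $\phi^H=\phi^{H'}\circ f$. For each ring map $\psi\colon A(G)\to\Z$, define
\[
\Lambda_\psi(\lambda)=\{\psi(\lambda_t(x)) : x\in A(G)\}\subseteq 1+t\Z\fps{t}.
\]
By (PL3) this set is a subgroup. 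Since $f$ is bijective, the isomorphism gives $\Lambda_{\phi^H}(\epsilon)=\Lambda_{\phi^{H'}}(\sigma^\star)$. Hence the multiset of subgroups $\{\Lambda_{\phi^H}(\lambda)\}_{[H]\in C(G)}$ is an isomorphism invariant of $(A(G),\lambda_t)$.

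\emph{Step 2: compute these subgroups.} For a $G$-set $X$, an $H$-stable subset or multiset is a union of $H$-orbits of $X$, with multiplicities in the multiset case. This gives
\[
\phi^H\epsilon_t(X)=\prod_{O}(1+t^{|O|}),\qquad \phi^H\sigma_t(X)=\prod_O(1-t^{|O|})^{-1},
\]
where $O$ runs over the $H$-orbits of $X$. Using $\sigma^\star_t(X)=\sigma_{-t}(X)^{-1}$, we get
\[
\phi^H\sigma^\star_t(X)=\prod_O\bigl(1-(-t)^{|O|}\bigr).
\]
Factors from odd orbits agree in the two cases. A factor from an orbit of even size $m$ is $1+t^m$ in one case and $1-t^m$ in the other. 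So $\Lambda_{\phi^H}(\epsilon)$ is the subgroup generated by the series $\prod_O(1+t^{|O|})$ for $X=G/K$, and likewise for $\sigma^\star$ with the signs changed on even orbits.

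\emph{Step 3 (main obstacle): find a property of subgroups of $1+t\Z\fps{t}$ that separates the two families when $|G|$ is even.} My first try is to pass to ghost coordinates, identifying $1+t\Z\fps{t}$ with the big Witt vectors via $t\,\tfrac{d}{dt}\log$. The series $1+t^m$ and $1-t^m$ have visibly different ghost components in even degrees: $1-t^m$ contributes a multiple of $-m$ in every degree divisible by $m$, whereas $1+t^m$ contributes alternating signs.

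I would then look at the subgroups attached to $H=e$ and to a subgroup $H=C_2$, which exists because $|G|$ is even. On the $\epsilon$ side, every $\Lambda_{\phi^H}$ is contained in the subgroup generated by the series $1+t^m$. On the $\sigma^\star$ side, the subgroup for $H=C_2$ contains $(1+t)^a(1-t^2)^b$ with $b\neq0$; taking $X=G/e$ gives $b=|G|/2$. My aim is to show that this series does not lie in $\langle 1+t^m : m\ge1\rangle$, and more generally that no $\Lambda_{\phi^H}(\epsilon)$ can equal this subgroup, for instance by reading off the second ghost coordinate modulo the first.

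The delicate part is making this membership test hold for an entire subgroup rather than a single element. If the ghost-coordinate test proves awkward, I would instead compare the \emph{sizes} of the set of $\psi$ whose subgroup $\Lambda_\psi$ contains a series of the form $1-t^{2}g$ with $g$ invertible, since only the $\sigma^\star$ side produces such series. That would finish the contradiction, and hence the proof.
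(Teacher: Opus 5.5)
\textbf{There is a genuine gap.} Your Steps 1 and 2 are correct, and the reduction at the start of Step 3 is the right one. Every $\Lambda_{\phi^H}(\epsilon)$ lies in $\langle 1+t^m : m\ge 1\rangle$, so a single element of $\Lambda_{\phi^{C_2}}(\sigma^\star)$ outside that subgroup already gives the contradiction; nothing ``delicate'' about whole subgroups arises. The one thing that must be proved is that $(1-t^2)^{|G|/2}=\phi^{C_2}\sigma^\star_t(G/e)$ is not in $\langle 1+t^m\rangle$. You never prove it, and neither suggested test can.

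\emph{The ghost-coordinate test.} No test that reads off finitely many coefficients or ghost components can work. We have $(1-t^m)^{-1}=\prod_{k\ge 0}(1+t^{2^k m})$, and every element of $1+t\Z\fps{t}$ is a convergent product $\prod_m(1-t^m)^{c_m}$. Hence $\langle 1+t^m\rangle$ is $t$-adically dense in $1+t\Z\fps{t}$. Concretely, the congruence $w_2\equiv w_1 \pmod 2$ holds for every series, so the second ghost coordinate modulo the first carries no information here.

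\emph{The fallback.} It rests on a false claim. For $G=C_2$, the element $-[C_2/e]$ gives $(1+t^2)^{-1}=1-t^2(1+t^2)^{-1}\in\Lambda_{\phi^{C_2}}(\epsilon)$. In fact, for $G=C_2$ the number of such $\psi$ is $1$ on both sides, so the count does not separate the two structures.

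\textbf{The missing idea} is a global invariant, and evaluation at $t=1$ does the job. Fix $C\le G$ of order two. Write $\phi^C\circ f=\phi^H$ and $f^{-1}([G/e])=X-Y$ with $X,Y$ genuine $G$-sets. Then
\[
(1-t^2)^{|G|/2}=\phi^C\sigma^\star_t\bigl(f(X-Y)\bigr)=\phi^H\epsilon_t(X-Y)=P_X/P_Y,
\]
where $P_Z=\prod_{O\in H\backslash Z}(1+t^{|O|})$. So $(1-t^2)^{|G|/2}P_Y=P_X$ is an identity of polynomials, and setting $t=1$ gives $0=2^{\#(H\backslash X)}$, a contradiction. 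Equivalently, the order of vanishing at $t=1$ is a homomorphism on power series that are rational functions; it kills every $1+t^m$ but not $1-t^2$.

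\textbf{Comparison with the paper.} With this lemma your argument is complete, and it is genuinely different from the paper's. The paper first shows that any such isomorphism is normalized, using the kernel of linearization and closure of ideals under the operations. It then shows that normalized automorphisms permute the classes $[G/C]$ with $|C|=2$ while preserving $|N_G(C)|$, and finally compares $t^2$-coefficients at $G/e$. Your route needs only that every ring map $A(G)\to\Z$ is a mark homomorphism, and not even the full bijection of Step 1.
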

To prove this, we first define \emph{normalized} automorphisms of the Burnside ring, following \cite{kimmerleAutomorphismsBurnsideRings1995}.

\begin{proposition}\label{prop:normalized-aut-of-AG}
    Let $f \colon A(G) \xto{\sim} A(G)$ be a ring automorphism of the Burnside ring of a group. Then the following are equivalent:
    \begin{enumerate}
        \item $f(G/e) = G/e$
        \item $f(I_{e,0}) = I_{e,0}$, that is, $f$ preserves the augmentation ideal

        \item $f$ commutes with the augmentations to the integers, that is, the diagram commutes
              \[ \begin{tikzcd}
                      A(G)\ar[rr,"f" above]\ar[dr,"\phi^e" below left] &  & A(G)\ar[dl,"\phi^e" below right]\\
                      & \Z. &
                  \end{tikzcd} \]
    \end{enumerate}
    If any of these conditions hold, we say that $f$ is \emph{normalized}.
\end{proposition}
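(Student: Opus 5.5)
We prove the cycle of implications (1)$\Rightarrow$(2)$\Rightarrow$(3)$\Rightarrow$(1), using only that $f$ is a ring automorphism and the description of prime ideals in \Cref{prop:prime-ideals-AG}.

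\emph{(2)$\Leftrightarrow$(3).} Commutativity of the triangle says $\phi^e\circ f=\phi^e$. If this holds, then $f(I_{e,0})\subseteq I_{e,0}$. Applying the same argument to $f^{-1}$, which also satisfies $\phi^e\circ f^{-1}=\phi^e$, gives the reverse inclusion, so $f(I_{e,0})=I_{e,0}$.

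For the converse, suppose $f(I_{e,0})=I_{e,0}$. Then $f$ descends to a ring automorphism $\bar f$ of $A(G)/I_{e,0}$. Since $\phi^e$ is surjective (it sends $G/G$ to $1$), this quotient is identified with $\Z$. The only ring automorphism of $\Z$ is the identity, so $\bar f=\mathrm{id}$, which is exactly the statement $\phi^e\circ f=\phi^e$.

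\emph{(3)$\Rightarrow$(1).} Write $f(G/e)=x$. The key fact is that $G/e\cdot Y=\phi^e(Y)\,G/e$ for every $Y\in A(G)$, since $G/e\times Y\cong\coprod_{|Y|}G/e$ for a $G$-set $Y$ and the identity extends linearly. Hence the ideal $\Z\cdot G/e$ is exactly the annihilator of the ideal $\bigcap_{H\neq e}I_{H,0}$; the latter consists of elements whose marks vanish at all nontrivial $H$.

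I would rather characterize $G/e$ intrinsically, as follows.
\begin{itemize}
\item $G/e$ satisfies $(G/e)^2=|G|\,G/e$.
\item Its marks are $(|G|,0,\dots,0)$.
\item The idempotent-like elements $y$ with $y^2=m y$ correspond, through the injection $\phi$, to mark vectors with entries in $\{0,m\}$.
\end{itemize}

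A ring automorphism $f$ induces, via $\phi\otimes\mathbb{Q}$, a permutation $\pi$ of the factors of $\prod_{C(G)}\mathbb{Q}$; these factors are the minimal primes $I_{H,0}$. Condition (3) says $\pi$ fixes the factor $[e]$. The element $f(G/e)$ then has marks equal to the marks of $G/e$ permuted by $\pi$. Since $G/e$ is supported only on the factor $[e]$, which is fixed by $\pi$, we get $\phi(f(G/e))=\phi(G/e)$. Injectivity of $\phi$ then gives $f(G/e)=G/e$.

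\emph{(1)$\Rightarrow$(3).} By the same permutation description, $f(G/e)$ has mark vector $|G|\,e_{\pi([e])}$. So the condition $f(G/e)=G/e$ forces $\pi([e])=[e]$, which says that $f$ commutes with $\phi^e$.

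\emph{Main obstacle.} The hard step is justifying that $f$ permutes the mark coordinates. I would argue this by noting that $f$ permutes the minimal prime ideals of $A(G)$. By \Cref{prop:prime-ideals-AG} these are exactly the $I_{H,0}$, since each $I_{H,p}$ contains $I_{H,0}$. Moreover, $\phi^H$ is the unique map $A(G)\to A(G)/I_{H,0}\cong\Z$, and $\Z$ has no nontrivial automorphisms. Hence $\phi^H\circ f=\phi^{\pi(H)}$ for a permutation $\pi$ of $C(G)$.
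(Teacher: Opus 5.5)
Your proof is correct and follows the same route as the paper: a ring automorphism of $A(G)$ acts on mark vectors by permuting the coordinates indexed by $C(G)$, so (1) and (3) both reduce to $\pi$ fixing $[e]$. You also justify two things the paper simply takes as known or immediate, namely the permutation fact (via the minimal primes $I_{H,0}$ and the fact that $\Z$ has no nontrivial automorphisms) and the equivalence (2)$\Leftrightarrow$(3). One unused aside is misstated: $\Z\cdot G/e$ equals $\bigcap_{H\neq e}I_{H,0}$ and is the annihilator of $I_{e,0}$, not of that intersection, whose annihilator is $I_{e,0}$.
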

\begin{proof} Equivalence of the latter two is immediate. For equivalence with the first statement, we recall that any automorphism of the Burnside ring extends to an automorphism of the ghost ring, where it is obtained by permuting coordinates. Since $G/e$ has a single nonzero mark, its image $f(G/e)$ also has a single nonzero mark. If (3) holds, then the $e$-mark of $f(G/e)$ is equal to the $e$-mark of $G/e$ and all other marks are zero, therefore $f(G/e) = G/e$. Conversely if $f(G/e) = G/e$, then by extending $f$ to an automorphism on the Burnside ring, we see it fixes the coordinate corresponding to $e$-marks, which is equivalent to (3).
\end{proof}

It is a result of Kimmerle and Roggenkamp that if $C\le G$ is cyclic, and $f$ is a normalized automorphism of $A(G)$, then $f([G/C]) = [G/C']$ for some other cyclic subgroup $C'\le G$ \cite[4.1]{kimmerleAutomorphismsBurnsideRings1995}. We provide a self-contained proof in the case where $C$ is cyclic of order two, which is the only case needed to prove \Cref{thm:Sieb-extension}.

\begin{proposition}\label{prop:normalized-aut-preserves-GC-order-two}
    Let $G$ be a finite group and $f\colon A(G) \xto{\sim} A(G)$ a normalized automorphism. Then for any subgroup $C\le G$ of order two, we have that $f([G/C]) = [G/C']$ where $C'\le G$ is also of order two and $\lvert N_G(C) \rvert = \lvert N_G(C') \rvert$.
\end{proposition}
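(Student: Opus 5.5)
We will work inside the ghost ring. As recalled in the proof of \Cref{prop:normalized-aut-of-AG}, $f$ extends to an automorphism of $\prod_{[H]\in C(G)}\Z$ (after tensoring with $\mathbb{Q}$). Any automorphism of this product permutes its primitive idempotents, so it is given by a permutation $\pi$ of $C(G)$, in the sense that $\phi^{H}(f(x)) = \phi^{\pi(H)}(x)$ for all $x$. Normalized means exactly that $\pi(e)=e$. To locate the $G/C$ with $|C|=2$, we use only structural information that $f$ must preserve: the marks of $G/e$, the multiplicative relations with $G/e$, and integrality.

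The key computation is $G/C\cdot G/e$. For any $H$ we have $G/H\times G/e\cong \coprod^{[G:H]} G/e$, so $x\cdot G/e=\phi^e(x)\,G/e$ for all $x$. Since $f$ is normalized, it preserves $\phi^e$ and fixes $G/e$.

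Next we want an intrinsic characterization of the classes $[G/C]$ with $|C|=2$. Write $y=f(G/C)$. Its marks are $\phi^{H}(y)=\phi^{\pi(H)}(G/C)$. Those of $G/C$ are nonzero only for $H$ conjugate to a subgroup of $C$, that is, for $H=e$ or $H\sim C$. They are $\phi^e=[G:C]$ and $\phi^C(G/C)=|N_G(C)/C|=|N_G(C)|/2$. Hence $y$ has exactly two nonzero marks: $[G:C]$ at $e$, and $|N_G(C)|/2$ at $C':=\pi^{-1}(C)$. Now $y$ lies in $A(G)$, so we write $y=\sum_K a_K\,G/K$ with integers $a_K$ and use the triangularity of the table of marks, working from the top. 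For any $K$ maximal with $a_K\ne0$ we get $\phi^K(y)=a_K|N_G(K)/K|\neq 0$, so $K\in\{e,C'\}$. If $C'=e$ this contradicts $\pi$ being a bijection fixing $e$, so $C'\ne e$. Therefore $y=a\,G/C'+b\,G/e$, with $a|N_G(C')/C'|=|N_G(C)|/2$.

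The remaining step is to show that $|C'|=2$, $a=1$ and $b=0$. We expect this to be the main obstacle, since integrality alone does not force it. First we use the $e$-mark: $a[G:C']+b|G|=[G:C]$. Then we exploit that $f$ is a ring map by applying it to the relation
\[
(G/C)^2 = G/C\times G/C \cong \tfrac{|N_G(C)|}{2}\,G/C + m\,G/e
\]
for some integer $m$. This relation follows from checking marks: the $C$-mark of the left side is $(|N_G(C)|/2)^2$. Applying $f$ gives $y^2=\tfrac{|N_G(C)|}{2}y+m\,G/e$. Comparing $C'$-marks gives $a^2c^2=\tfrac{|N_G(C)|}{2}\,ac$ with $c=|N_G(C')/C'|$, which is consistent and gives nothing new. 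So instead we use the prime-ideal structure of \Cref{prop:prime-ideals-AG}. Since $f$ is a ring automorphism, it permutes prime ideals and preserves the relation $I_{H,p}=I_{K,p}$. By Dress, $I_{H,p}=I_{K,p}$ exactly when $O^p(H)\sim O^p(K)$.

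From $\pi(e)=e$ we get that $\pi$ preserves the set of $H$ with $I_{H,2}=I_{e,2}$, namely the $2$-subgroups. Applying this with $p$ odd, where $I_{H,p}=I_{e,p}$ characterizes $p$-subgroups, shows that $\pi$ preserves the set of $p$-groups for every $p$. Hence $C'$ is a nontrivial $2$-group. To pin down that $|C'|=2$, we apply the same reasoning to $\pi^{-1}$ and use minimality: among nontrivial $2$-subgroups, the order-two ones are those $H$ whose only proper subgroup class is $e$. We would then verify this minimality via the support of marks, that is, via the set of $K$ with $\phi^K(G/H)\neq0$, which is preserved by the transport above. With $|C'|=2$ in hand, integrality of the coefficient of $G/C'$ in $f^{-1}$ combined with the symmetric computation for $f^{-1}$ forces $a=\pm1$. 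Positivity of the marks gives $a=1$. Then $b=0$ follows from the $e$-mark equation $a[G:C']+b|G|=[G:C]$, once we know $|N_G(C')|=|N_G(C)|$, and that equality comes from the $C'$-mark: $|N_G(C')|/2=|N_G(C)|/2$.
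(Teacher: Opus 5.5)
\textbf{There is a genuine gap at the step $|C'|=2$.}

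Your prime-ideal step is sound as far as it goes. Since $f^{-1}(I_{H,p})=I_{\pi(H),p}$, Dress's criterion ($I_{H,p}=I_{K,p}$ iff $O^p(H)\sim O^p(K)$) shows that $\pi$ permutes the classes of $p$-subgroups, so $C'$ is a nontrivial $2$-group. The problem is getting from there to $|C'|=2$. Under $f$, the mark-support of $G/H$ (the classes subconjugate to $H$) becomes the mark-support of $f(G/H)$, which is its preimage under $\pi$. To compare this with the mark-support of $G/\pi^{-1}(H)$, you would need to know that $f$ sends transitive $G$-sets to transitive ones, or that $\pi$ respects subconjugacy. That is the kind of statement you are trying to prove, so the proposed minimality argument is circular. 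Concretely, if $G$ is a $2$-group, all $I_{H,2}$ coincide and for odd $p$ the $I_{H,p}$ separate all conjugacy classes. The prime ideals then tell you nothing beyond $\pi(e)=e$, and nothing in your proposal excludes $C'$ cyclic of order $4$.

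\textbf{The two-term form is asserted too early.} The top-down triangularity argument only shows that every maximal class in the basis-support of $y$ is $e$ or $C'$. Hence the support consists of classes subconjugate to $C'$, not just $e$ and $C'$. For example, $2[C_4/C_4]-[C_4/C_2]\in A(C_4)$ has its only nonzero mark at $C_4$. The two-term form does follow once $|C'|=2$ is known, so in your ordering this is repairable, but everything then rests on the missing step.

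The paper goes the other way. From the two-term form it reads off that $C'$ has no proper nontrivial subgroup $K$ (else $\phi^K(y)=a\,\phi^K(G/C')\neq 0$). So $|C'|=p$ is prime, and $2a+2pb=p$ forces $p=2$. However, the paper reaches the two-term form by the same inference from maximal elements, so it does not supply your missing step either.

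\textbf{A possible fix.} Use the congruences $\sum_{gK\in N_G(K)/K}\phi^{\langle g\rangle K}(x)\equiv 0\pmod{|N_G(K)/K|}$ for $x\in A(G)$. These follow from Burnside's lemma for $N_G(K)/K$ acting on $X^K$.
\begin{itemize}
\item If $C'$ is not cyclic, then at $K=e$ only $g=e$ contributes, giving $|G|/2\equiv 0\pmod{|G|}$, which is absurd.
\item If $C'$ is cyclic of order $2^k$ with $k\ge 2$, the congruence at $K=C'$ gives $\phi^{C'}(y)=t\,|N_G(C')|/2^k$ with $t\in\Z$. The congruence at $K=e$ then reads $|G|(1+t)/2\equiv 0\pmod{|G|}$, so $t$ is odd. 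At the index-two subgroup $L<C'$, exactly $[N_G(L):N_G(C')]$ cosets contribute. This gives $t\,|N_G(L)|/2^k\equiv 0\pmod{|N_G(L)|/2^{k-1}}$, so $t$ is even, a contradiction.
\end{itemize}
The same congruences, taken at the index-$q$ subgroups for each prime $q$ dividing $|C'|$, also dispose of cyclic $C'$ of any composite order. So the prime-ideal detour can be avoided entirely.

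With $|C'|=2$ in hand, your remaining steps go through as in the paper: $a=\pm1$ from integrality of $f^{-1}(G/C')$, $a>0$ from the $C'$-mark, and then $b=0$. Note that $b=0$ comes from the $e$-mark equation with $a=1$ and $|C'|=2$, not from the normalizer equality.
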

\begin{proof} Since $f$ extends to the ghost ring, $f([G/C])$ has exactly two nonzero marks. As $f$ is normalized, it preserves the $e$ mark, so
    \[
        \phi^e f([G/C]) = \lvert G \rvert/2.
    \]
    There is some other $H$ for which $\phi^H f([G/C]) = \phi^C ([G/C]) = \lvert N_G(C) \rvert / 2$, and all other marks beyond $e$ and $H$ are zero. Writing
    \[
        f([G/C]) = \sum_K a_K [G/K],
    \]
    we can pick a maximal subgroup $M\le G$ in the subconjugacy poset (a priori there could be multiple) for which $a_M\ne 0$. Taking $M$-marks, we get
    \[
        \phi^M  f([G/C])= \sum_K a_K \phi^M [G/K]
    \]
    Note that
    \[
        \phi^M([G/K]) \begin{cases}
            >0 & : M \leq K \text{ in the subconjugacy poset (i.e. $M$ is subconjugate to $K$)} \\
            =0 & : \text{else}.
        \end{cases}
    \]
    Therefore we get
    \[
        \phi^M f([G/C]) = \sum_{K\ge M} a_K \phi^M([G/K])
    \]
    If $K>M$ then $a_K = 0$ by hypothesis, therefore we get
    \[
        \phi^Mf([G/C]) = a_M \phi^M([G/M]) \ne 0,
    \]
    so $M=e$ or $M=H$ (in the subconjugacy poset). Therefore
    \[
        f([G/C]) = a_e[G/e] + a_H[G/H].
    \]
    If $e \lneq K \lneq H$, then $\phi^K(f([G/C])) = 0$ since there are only two nonzero marks, but then
    \[
        0 = \phi^K(a_e [G/e] + a_H [G/H]) = a_H \phi^K([G/H]).
    \]
    Since $\phi^K(G/H)\ne 0$, this implies $a_H=0$, but then $\lvert N_G(C) \rvert/2 = \phi^H(f([G/C])) = 0$ is a contradiction. Thus there are no proper nontrivial subgroups of $H$, hence $H$ is cyclic of some prime order $p$. Taking $e$ marks and using that we are normalized, we get
    \begin{align*}
        \phi^e f([G/C]) & = a_e|G| + a_{H} \frac{|G|}{p} \\
        = \phi^e([G/C]) & = \frac{|G|}{2}.
    \end{align*}
    Rearranging, this gives us
    \[
        2pa_e + 2a_{H} = p,
    \]
    hence $p$ is even, therefore equal to $2$. Now taking $H$-marks, we learn that $a_H$ is positive:
    \begin{align*}
        a_H \frac{\lvert N_G(H) \rvert}{2} = \phi^H(f([G/C])) = \frac{\lvert N_G(C) \rvert}{2} > 0 \implies a_H > 0.
    \end{align*}
    Using the fact that $f^{-1}$ is also a normalized automorphism of $A(G)$, we obtain
    \[[G/C] = f^{-1}(f([G/C])) = f^{-1}(a_e [G/e] + a_H [G/H]) = a_e [G/e] + a_H f^{-1}([G/H]),\]
    which we can rearrange as
    \[f^{-1}([G/H]) = \frac{1}{a_H} [G/C] - \frac{a_e}{a_H}[G/e].\]
    Since $f^{-1}([G/H])$ is an element of the Burnside ring, $\frac{1}{a_H}$ must be an integer. Since $a_H$ is a positive integer, we conclude that $a_H = 1$. It follows (by looking at $e$-marks again) that $a_e = 0$, and we conclude that $f([G/C]) = [G/H]$, as desired. Finally,
    \[\lvert N_G(H) \rvert = 2 \phi^H([G/H]) = 2 \phi^H(f([G/C])) = 2 \phi^C([G/C]) = \lvert N_G(C) \rvert.\qedhere\]
\end{proof}

\begin{lemma}\label{lem:PL-aut-of-AG-is-normalized}
    If $f \colon (A(G),\epsilon_t) \xto{\sim} (A(G),\sigma_t^\star)$ is an isomorphism of pre-$\lambda$ rings, then $f$ is a normalized automorphism of the Burnside ring.
\end{lemma}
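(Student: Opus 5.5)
The plan is to verify condition (3) of \Cref{prop:normalized-aut-of-AG}, namely $\phi^e \circ f = \phi^e$. I would do this by identifying $\phi^e$ as the \emph{unique} mark homomorphism that is a pre-$\lambda$ map $(A(G),\epsilon_t) \to \Z_{\text{binom}}$.

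\emph{Step 1.} First I check that $\phi^e$ is a pre-$\lambda$ homomorphism in two ways.
\begin{itemize}
    \item As a map $(A(G),\epsilon_t)\to\Z_{\text{binom}}$: for an honest finite $G$-set $X$ we have $\lvert \binom{X}{n}\rvert = \binom{\lvert X\rvert}{n}$. Both sides satisfy PL3, so the identity extends to virtual $G$-sets.
    \item As a map $(A(G),\sigma_t)\to \Z_{\text{multi}}$: $\lvert X^{\times n}/\Sigma_n\rvert$ is the number of multisets of size $n$, which is $\binom{\lvert X\rvert+n-1}{n}$ by \Cref{exa:duality-on-Z}. Applying the duality functor of \Cref{prop:star-duality} turns this into a pre-$\lambda$ map $\phi^e \colon (A(G),\sigma^\star_t) \to \Z_{\text{binom}}$.
\end{itemize}

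\emph{Step 2.} Let $\psi = \phi^e\circ f$. This is a ring homomorphism $A(G)\to\Z$. It is also pre-$\lambda$ from $(A(G),\epsilon_t)$ to $\Z_{\text{binom}}$, since
\[
\psi(\epsilon^n x) = \phi^e\bigl((\sigma^\star)^n f(x)\bigr) = \binom{\psi(x)}{n}.
\]
As in the proof of \Cref{prop:normalized-aut-of-AG}, $f$ extends to the ghost ring by permuting coordinates. Hence $\psi = \phi^H$ for some subgroup $H$. Alternatively, every ring map $A(G)\to\Z$ is a mark, since after rationalizing it is a projection of $\prod_{C(G)}\mathbb{Q}$ onto a factor.

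\emph{Step 3 (the key point).} I show that $\phi^H$ is not pre-$\lambda$ for $\epsilon_t$ when $H\neq e$. Test this on $X = G/e$. We have $\phi^H(G/e)=0$, so pre-$\lambda$ compatibility would force $\phi^H(\epsilon^n(G/e)) = \binom{0}{n} = 0$ for all $n\ge1$. Now take $n=\lvert H\rvert$ and the subset $H\subseteq G = G/e$. It is fixed by left multiplication by every element of $H$, so it is an $H$-fixed point of $\binom{G/e}{\lvert H\rvert}$. Thus $\phi^H(\epsilon^{\lvert H\rvert}(G/e))\ge 1$, a contradiction.

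Therefore $H$ is conjugate to $e$, so $\phi^e\circ f=\phi^e$ and $f$ is normalized. The only step needing care is Step 2, the identification of $\phi^e\circ f$ as a mark. This follows from the ghost-ring description of automorphisms already used earlier in the paper.
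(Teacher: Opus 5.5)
Your proof is correct, but it is organized differently from the paper's.

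The paper works with ideals rather than with maps to $\Z$, in two stages:
\begin{enumerate}
\item It passes through the linearization map $\theta$. Since $\ker\theta=\bigcap_{C\text{ cyclic}} I_{C,0}$ is closed under the $\sigma^\star$-operations, $f^{-1}(\ker\theta)$ is an intersection of primes $I_{H_j,0}$ closed under the $\epsilon$-operations, and one of these primes must be $I_{e,0}$. This yields $f(I_{e,0})=I_{C_r,0}$ for a \emph{cyclic} $C_r$.
\item It notes that $I_{e,0}$ is closed under the $\epsilon$-operations. Hence $I_{C_r,0}$ is closed under the $\sigma^\star$-operations, and therefore under the $\sigma$-operations. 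It then rules out $C_r\neq e$ by exhibiting a $C_r$-fixed point of $\sigma^{|C_r|}(G/e)$.
\end{enumerate}

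In effect, the paper tests the mark $\phi^e\circ f^{-1}=\phi^{C_r}$ against the symmetric operations. You test $\phi^e\circ f$ against the exterior operations. This is why you need $\phi^e$ to be a binomial augmentation for $\sigma^\star_t$ (your second bullet, obtained via the duality functor), whereas the paper needs it for $\epsilon_t$.

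Your version is shorter, and it shows that the linearization detour is unnecessary. The fixed-point test works for any nontrivial subgroup $H$: take a right coset $Hg$, viewed as a subset in your case or as a multiset in the paper's. So the cyclicity of $C_r$ is never actually used.

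Also, the first bullet of your Step 1 is not needed for the conclusion. Only the second bullet and Step 3 are used.
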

\begin{proof} Let $A(G) \xto{\theta} R(G)$ denote the complex linearization map, sending a finite $G$-set to its permutation representation. Recall this is a map of pre-$\lambda$ rings, where $A(G)$ is equipped with the Siebeneicher (resp. symmetric) power structure and $R(G)$ is equipped with the exterior (resp. symmetric) power structure. As previously discussed, the kernel $K = \ker(\theta)$ of the complex linearization map is of the form
    \begin{align*}
        K = \bigcap_{j=1}^n I_{C_j,0},
    \end{align*}
    where $C_1, \ldots, C_n$ is a complete class of representatives for all the conjugacy classes of cyclic subgroups of $G$, and without loss of generality let's assume $C_1 = \{e\}$. The equality above holds because a representation is zero if and only if all its characters are zero, and these are just marks for cyclic groups.

    Note the following -- since the symmetric or exterior powers of the zero representation are still zero, and the complex linearization map is a map of pre-$\lambda$ rings, this implies that $K$ is closed under Siebeneicher operations.

    Now consider the composite
    \begin{align*}
        (A(G),\epsilon_t) \xto{f} (A(G),\sigma_t^\star) \xto{\theta} (R(G),\wedge_t).
    \end{align*}
    Then $f^{-1}(K)$ is some ideal in $A(G)$, which is closed under taking exterior powers. Moreover by \Cref{prop:prime-ideals-AG}, we have that $f^{-1}(K)$ is of the form
    \begin{align*}
        f^{-1}(K) = \bigcap_{j=1}^n I_{H_j,0},
    \end{align*}
    for some conjugacy classes of subgroups $H_j \le G$. We first claim that one of the $H_j$'s is equal to $\{e\}$. Suppose towards a contradiction none of them were. The finite $G$-set $G/e$ has no $H$-fixed points for any non-identity subgroup $H$, hence $[G/e] \in \bigcap_{j=1}^n I_{H_j,0}$. However it is clear that $\epsilon^{|G|}([G/e]) = [G/G]$ has nonzero marks for every group, hence cannot lie in $f^{-1}(K)$, which contradicts its closure under exterior operations.

    So now $H_j = \{e\}$ for some $j$. Since $f$ is an automorphism, we necessarily have that
    \begin{align*}
        f(I_{e,0}) = I_{C_r,0}
    \end{align*}
    for some other cyclic group $C_r$. We claim that $C_r = \{e\}$, which would tell us that $f$ preserves the augmentation ideal, and therefore is normalized by \Cref{prop:normalized-aut-of-AG}. Suppose towards a contradiction that $C_r \ne \{e\}$. Note that $I_{e,0}$ is closed under exterior powers, precisely because it is the kernel of the pre-$\lambda$ ring map to $\Z$ with the binomial structure (in general $I_{K,0}$ will not be closed under exterior powers unless $K=\{e\}$). Then by necessity, $I_{C_r,0}$ is closed under dual symmetric operations, and therefore symmetric operations. Note that since $C_r \ne \{e\}$ by hypothesis, we have that $G/e \in I_{C_r,0}$. However if $s = |C_r|$, then it is clear that $\sigma^s(G/e)$ has a $C_r$-fixed point given by a tensor corresponding to a coset of $C_r$ in $G$. This is a contradiction, therefore $C_r = \{e\}$ and $f$ is normalized.
\end{proof}

We can now prove our extension of Siebeneicher's theorem.

\begin{proof}[Proof of~\Cref{thm:Sieb-extension}]
    One direction is clear, so it suffices to argue if an isomorphism of pre-$\lambda$ rings exists then $|G|$ is odd. By \Cref{lem:PL-aut-of-AG-is-normalized} any isomorphism of pre-$\lambda$ rings $f \colon (A(G),\epsilon_t) \xto{\sim} (A(G),\sigma_t^\star)$ is normalized. If such an isomorphism existed, then we can extend it to a ring automorphism of $A(G)\fps{t}$ fixing $t$, and being an automorphism of pre-$\lambda$ rings translates into the condition that
    \begin{align*}
        f(\epsilon_t(x)) = \sigma_t^\star(f(x)) = \sigma_{-t}(f(x))^{-1}.
    \end{align*}
    In other words $f(\epsilon_t(x))\sigma_{-t}(f(x)) = 1$. Expanding this expression a bit, we see that
    \begin{align*}
        \left( 1 + f(x)t + f \left( \binom{x}{2} \right)t^2 + \ldots  \right) \left( 1 - f(x)t + \sigma^2(f(x)) t^2 - \cdots \right) =1
    \end{align*}
    In particular the coefficient on $t^2$ must vanish. Plugging in $x=G/e$ into this coefficient, we necessarily have that
    \begin{align*}
        [G/e]^2 - f \left( \binom{[G/e]}{2} \right) = \sigma^2([G/e]).
    \end{align*}
    We understand this binomial by \Cref{ex:Ge-choose-2}, and it's not hard to see that $\sigma^2([G/e]) = \binom{[G/e]}{2} + [G/e]$. Letting $G[2] \subseteq G$ denote the subset of $2$-torsion elements, including the identity, we can rearrange to obtain:
    \begin{align*}
        (\#G[2] -1) [G/e] & = \sum_{\substack{C \le G \\ \text{order }2}} [G/C] + f\left(\sum_{\substack{C \le G \\ \text{order }2}} [G/C]\right).
    \end{align*}
    We have
    \[\sum_{\substack{C \leq G \\ \text{order }2}} [G/C] = \sum_{(C)} [G : N_G(C)] [G/C]\]
    where the sum on the right-hand side is over conjugacy classes of subgroups of order two.
    By \Cref{prop:normalized-aut-preserves-GC-order-two}, $f$ permutes the classes $[G/C]$ while preserving their multiplicities $[G : N_G(C)]$. Thus we have that $f(\sum_C [G/C]) = \sum_C [G/C]$. Therefore the above becomes
    \begin{align*}
        (\#G[2]-1)[G/e] = 2 \sum_{(C)}[G : N_G(C)] [G/C].
    \end{align*}
    $[G/e]$ and the $[G/C]$'s are linearly independent in $A(G)$, so we conclude that $\#G[2] = 1$ and that $G$ has no subgroups of order two. These are both equivalent to $\#G$ being odd.
\end{proof}

\section{The Dress Map}\label{sec:dress}
\secttoc

The Grothendieck--Galois correspondence exhibits, for a field $k$, an equivalence of categories between the category of finite sets equipped with a continuous action of the profinite absolute Galois group of $k$, and the opposite category of finite \'etale $k$-algebras over $k$. For a finite Galois field extension $L/k$, we can input a finite $G$-set and output a finite \'etale $k$-algebra associated to it. This algebra has a natural \emph{trace form} on it, which we can think about as living in the Grothendieck--Witt ring of quadratic forms over $k$.\footnote{All of our fields have characteristic $\ne 2$.} We can see this at the level of spectra, where we will obtain a map from the $G$-fixed points of the $G$-equivariant sphere spectrum to the Hermitian $K$-theory spectrum of the ground field $k$.

At the level of $\pi_0$, this map was first studied by Dress \cite[Appendix~B]{Dress-Bielefeld}. We establish that it is a pre-$\lambda$ ring homomorphism for the exterior structure $\epsilon_t$ on $A(G)$, and a new pre-$\lambda$ structure $b_t$ we define on $\GW(k)$.

\textbf{Assumption}: For this section we denote by $k$ a field of characteristic $\ne 2$.

\subsection{Unimod and the Scharlau Map} We denote by $\Unimod_k$ the category whose objects are finite-dimensional $k$-vector spaces equipped with non-degenerate symmetric bilinear forms, and whose morphisms are isometries between these: explicitly, a morphism $f \colon (V,\alpha) \to (W,\beta)$ is a $k$-linear morphism $f\colon V \to W$ satisfying
\begin{align*}
    \beta(f(v_1),f(v_2)) = \alpha(v_1,v_2)
\end{align*}
for every pair of vectors $v_1,v_2\in V$. The category $\Unimod_k$ admits finite biproducts given by direct sum, where the bilinear form on $(V,\alpha)\oplus(W,\beta)$ is given by $\langle v_1 \oplus w_1, v_2 \oplus w_2\rangle = \alpha(v_1,v_2)+\beta(w_1,w_2)$. There is also a symmetric monoidal structure on $\Unimod_k$ given by the tensor product of $k$-vector spaces, whose bilinear form is defined by $\langle v_1 \otimes w_1, v_2 \otimes w_2\rangle = \langle v_1,v_2\rangle\cdot\langle w_1,w_2\rangle$. The symmetric monoidal structure distributes over direct sum.

Attached to any finite \'etale $k$-algebra $E$ is its \emph{trace form} $\tr_{E/k}$, which we will see is a contravariant assignment from finite \'etale $k$-algebras to $\Unimod_k$. This is a bit easier to describe via the Galois correspondence.

Let $G_s = \Gal(k^s/k)$ denote the absolute Galois group of $k$, and $\Fin_{G_s}$ the category of finite sets equipped with a continuous $G_s$-action and equivariant maps between them. The fundamental theorem of Galois theory states that there is an equivalence of categories $\Fin_{G_s} \simeq \FEt_k^\op$ given by sending a finite $G_s$-set $X$ to the $k$-vector space of $G_s$-equivariant maps from $X$ to $k^s$. This has an algebra structure given by pointwise multiplication in $k^s$.

Given a finite $G_s$-set $X$, we can define a bilinear form on $\Map_{G_s}(X,k^s)$ given by the formula
\begin{equation}\label{eqn:trace-form-on-map-X}
    \begin{aligned}
        \Map_{G_s}(X,k^s) \times \Map_{G_s}(X,k^s) & \to k                          \\
        (f,g)                                      & \mapsto \sum_{x\in X}f(x)g(x).
    \end{aligned}
\end{equation}
\begin{proposition} Let $X\in \Fin_{G_s}$ correspond to a finite \'etale algebra $E = \Map_{G_s}(X,k^s)$ along the Galois correspondence. Then the bilinear form in \Cref{eqn:trace-form-on-map-X} is the \emph{trace form} of $E$, given by sending a pair of elements in $E$ to the trace of multiplication by their product, considered as a $k$-linear endomorphism of $E$:
    \begin{align*}
        E \times E & \to k                                      \\
        (e_1,e_2)  & \mapsto \Tr_k(\text{mult}_{e_1\cdot e_2}).
    \end{align*}
\end{proposition}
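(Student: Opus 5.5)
The plan is to reduce to the case of a transitive $G_s$-set, compute both forms after base change to $k^s$, and then compare them there.

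\emph{Reduction to orbits.} Write $X$ as a disjoint union of orbits $X = \coprod_i X_i$. Then $E \cong \prod_i E_i$ with $E_i = \Map_{G_s}(X_i,k^s)$. Multiplication by an element of $E$ acts block-diagonally on this product, so the trace form of $E$ is the orthogonal sum of the trace forms of the $E_i$. The form in \Cref{eqn:trace-form-on-map-X} splits the same way, since the sum over $x\in X$ breaks into sums over the orbits $X_i$. This step is routine.

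\emph{Base change to $k^s$.} Both forms are $k$-bilinear and symmetric on the same space $E$, so it suffices to check they agree after extending scalars to $k^s$. The key input is the standard Galois-descent isomorphism
\[
E\otimes_k k^s \xrightarrow{\sim} \Map(X,k^s), \qquad f\otimes c \mapsto (x\mapsto c\,f(x)),
\]
where the target is the algebra of all (not necessarily equivariant) functions. This is the step I expect to need the most care. To prove it for a transitive $X\cong G_s/U$, identify $E$ with the fixed field $F=(k^s)^U$ by evaluation at the base point. The map then becomes $F\otimes_k k^s\to\prod_{\tau\in \mathrm{Hom}_k(F,k^s)} k^s$, $a\otimes c\mapsto (c\,\tau(a))_\tau$. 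This is an isomorphism by the primitive element theorem ($F$ is separable), or equivalently by Dedekind independence of characters together with a dimension count: both sides have dimension $[F:k]=|X|$ over $k^s$.

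\emph{Comparison.} In the split algebra $\Map(X,k^s)\cong (k^s)^{X}$, multiplication by $h$ is diagonal with entries $h(x)$ in the basis of indicator functions. Its trace is therefore $\sum_{x\in X} h(x)$. Traces are compatible with base change, so
\[
\Tr_{E/k}(f\cdot g)=\Tr_{(k^s)^X/k^s}(f g)=\sum_{x\in X} f(x)g(x)
\]
for $f,g\in E$ viewed inside $\Map(X,k^s)$. This is exactly \Cref{eqn:trace-form-on-map-X}, which also shows incidentally that the sum lands in $k$.

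Alternatively, the transitive case can be done directly, bypassing the isomorphism. For $F=(k^s)^U$, the classical formula $\Tr_{F/k}(a)=\sum_{\tau}\tau(a)$, summing over the $[F:k]$ embeddings $\tau\colon F\to k^s$, matches $\sum_{x\in G_s/U} f(x)$. This works because the embeddings correspond bijectively to cosets $gU$ via $\tau_g(a)=g(a)$, and $f(gU)=g\,f(eU)$ by equivariance. I would present this version, since it needs only the embedding formula for the trace of a separable extension.
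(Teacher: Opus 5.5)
Your proposal is correct, and the version you say you would present is essentially the paper's own argument: reduce to a single orbit $G_s/U$, identify $E$ with the separable extension $(k^s)^U$ by evaluation at the base point, and match $\sum_x f_1(x)f_2(x)$ with $\sum_\tau \tau(a_1a_2)$ using $f_i(\gamma U)=\gamma\, f_i(eU)$ and the bijection between cosets and embeddings. Your alternative via the splitting $E\otimes_k k^s\cong \Map(X,k^s)$ is also sound, and amounts to the same computation after base change to $k^s$.
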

\begin{proof}
    It suffices to check this in the case that $X$ is a transitive $G_s$-set, i.e. $X = G_s/H$ for some finite index open subgroup $H \le G_s$. In this case, $E = (k^s)^H$ is a finite separable field extension of $k$, and an equivariant map $f \colon G_s/H \to k^s$ encodes a choice of element in $k^s$ fixed by $H$ (the image of the identity coset $eH$) together with its conjugates under the Galois action. The sum of these elements is the trace of $f(eH)$.
\end{proof}

This assignment is functorial, provided we restrict to the maximal subgroupoids of $\Fin_{G_s}$ and $\Unimod_k$, and we refer to it as the \emph{Dress functor}:

\begin{proposition}\label{prop:Dress-functor} The assignment in \Cref{eqn:trace-form-on-map-X} yields a well-defined functor which we call the \emph{Dress functor}
    \begin{align*}
        D\colon \Fin_{G_s}^{\simeq} & \to \Unimod_k^{\simeq}     \\
        X                           & \mapsto \Map_{G_s}(X,k^s).
    \end{align*}
\end{proposition}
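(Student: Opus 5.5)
To prove \Cref{prop:Dress-functor}, I would check three things in turn: that $D(X)$ really is an object of $\Unimod_k$, that an isomorphism of $G_s$-sets induces an isometry, and that this assignment respects identities and composition.

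\textbf{Objects.} By the Galois correspondence, $E=\Map_{G_s}(X,k^s)$ is a finite-dimensional $k$-vector space, in fact a finite \'etale $k$-algebra. The form $(f,g)\mapsto\sum_{x\in X}f(x)g(x)$ is clearly $k$-bilinear and symmetric. It is also $k$-valued: for $\tau\in G_s$, equivariance gives $\tau\bigl(\sum_x f(x)g(x)\bigr)=\sum_x f(\tau x)g(\tau x)$, which equals the original sum after reindexing $x\mapsto\tau x$. Hence the sum lies in $(k^s)^{G_s}=k$, using separability.

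For non-degeneracy, I would use the preceding proposition, which identifies this form with the trace form of $E$. The trace form of a finite \'etale algebra is non-degenerate, and it suffices to check this on each separable field factor, where it is the standard criterion for separability. A direct alternative is to base change to $k^s$: there $E\otimes_k k^s\cong\Map(X,k^s)$ and the form becomes the standard dot product on the basis of indicator functions, which is non-degenerate, and non-degeneracy descends along field extensions. I expect this step to be the only non-formal point, and I would lean on the trace-form identification.

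\textbf{Morphisms.} A morphism of $\Fin_{G_s}^{\simeq}$ is a $G_s$-equivariant bijection $\varphi\colon X\to Y$. Precomposition gives a $k$-linear map $\varphi^*\colon\Map_{G_s}(Y,k^s)\to\Map_{G_s}(X,k^s)$, and it is an isometry because $\sum_{x}f(\varphi x)g(\varphi x)=\sum_y f(y)g(y)$ by reindexing along the bijection. This assignment is contravariant, so to get a covariant functor I would send $\varphi$ to $(\varphi^{-1})^*=(\varphi^*)^{-1}$. This is legitimate because every morphism in the groupoid is invertible, which is exactly why the paper restricts to maximal subgroupoids.

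\textbf{Functoriality.} It is immediate that $(\mathrm{id})^*=\mathrm{id}$ and $((\psi\varphi)^{-1})^*=(\psi^{-1})^*(\varphi^{-1})^*$. The latter holds because $(\psi\varphi)^{-1}=\varphi^{-1}\psi^{-1}$ and pullback reverses composition. Together these give a well-defined functor $\Fin_{G_s}^{\simeq}\to\Unimod_k^{\simeq}$.
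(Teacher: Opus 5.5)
Your argument is correct: it checks that $D(X)$ is an object of $\Unimod_k$ ($k$-valued, symmetric, non-degenerate), that an equivariant bijection $\varphi$ yields the isometry $(\varphi^{-1})^*$, and that this respects identities and composition. The paper gives no separate proof; it relies on the preceding identification with the trace form and on the remark that functoriality requires passing to groupoid cores, which is exactly what you spell out.
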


The groupoid core $\Fin_{G_s}^{\simeq} \subseteq \Fin_{G_s}$ inherits symmetric monoidal structures given by cartesian product and disjoint union, and the former distributes over the latter. Similarly, $\Unimod_k^{\simeq}$ inherits tensor products and direct sums with the same distributivity. The Dress functor above is a functor between tight symmetric bimonoidal categories, in the sense of \cite[2.1.2]{JohnsonYau}. The Dress functor is compatible with these structures:

\begin{proposition} The Dress functor is strong bimonoidal:
    \begin{align*}
        D \colon (\Fin_{G_s}^\simeq, \amalg,\times) \to (\Unimod_k^\simeq, \oplus, \otimes).
    \end{align*}
\end{proposition}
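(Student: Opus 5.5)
We need to exhibit natural isometries $D(X\amalg Y)\cong D(X)\oplus D(Y)$ and $D(X\times Y)\cong D(X)\otimes D(Y)$, together with the unit isomorphisms $D(\emptyset)=0$ and $D(*)=k$, and then check the coherence axioms of a strong bimonoidal functor in the sense of \cite[2.1.2]{JohnsonYau}. The plan is to write every structure map explicitly at the level of functions $X\to k^s$. Coherence then reduces to identities between explicit maps of sets, which follow from the corresponding identities in $\Fin_{G_s}$.

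For the additive structure, restriction along the two inclusions gives a $k$-linear isomorphism
\[
\Map_{G_s}(X\amalg Y,k^s)\xrightarrow{\sim}\Map_{G_s}(X,k^s)\oplus\Map_{G_s}(Y,k^s).
\]
This is just the universal property of the coproduct in $\Fin_{G_s}$. It is an isometry because the sum $\sum_{z\in X\amalg Y} f(z)g(z)$ splits as a sum over $X$ plus a sum over $Y$, which is exactly the orthogonal sum form. The unit is $\Map_{G_s}(\emptyset,k^s)=0$. Naturality in isomorphisms of $G_s$-sets is immediate, since everything is defined by precomposition.

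For the multiplicative structure, the candidate map is
\[
\mu\colon D(X)\otimes_k D(Y)\to D(X\times Y),\qquad f\otimes g\mapsto \bigl((x,y)\mapsto f(x)g(y)\bigr).
\]
It is an isometry because
\[
\sum_{(x,y)} f(x)g(y)f'(x)g'(y)=\Bigl(\sum_x f(x)f'(x)\Bigr)\Bigl(\sum_y g(y)g'(y)\Bigr).
\]
A nondegenerate form forces an isometry to be injective, so $\mu$ is injective. It is therefore an isomorphism once we know the dimensions agree, i.e.\ that $\dim_k D(X)=|X|$. This is the main point requiring Galois descent: $k^s\otimes_k \Map_{G_s}(X,k^s)\cong \Map(X,k^s)$. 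Equivalently, under the Galois correspondence, $\mu$ is the canonical isomorphism $E\otimes_k F\cong$ the algebra corresponding to $X\times Y$, since products in $\Fin_{G_s}$ correspond to tensor products in $\FEt_k$. I would cite the Galois correspondence stated above for this and reduce to transitive $X=G_s/H$, where $\dim_k (k^s)^H=[G_s:H]=|X|$. The unit is $\Map_{G_s}(*,k^s)=k$ with the form $\langle a,b\rangle = ab$, which is the monoidal unit $\langle 1\rangle$.

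The remaining step, which is tedious but not deep, is coherence. The associativity, unit and symmetry constraints on both sides are the obvious ones, such as $(x,y)\mapsto(y,x)$ and $f\otimes g\mapsto g\otimes f$. The distributivity isomorphisms $X\times(Y\amalg Z)\cong (X\times Y)\amalg(X\times Z)$ and $V\otimes(W\oplus U)\cong (V\otimes W)\oplus(V\otimes U)$ are also canonical. Each coherence diagram can be checked on a pure tensor $f\otimes g\otimes h$, or on $f\otimes(g,h)$, by evaluating both composites at a point. Both sides evaluate to the same product of values in $k^s$, so the diagrams commute. I expect the only real obstacle to be the dimension/descent argument showing $\mu$ is surjective. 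The remaining steps are bookkeeping, which I would present by listing the constraint maps and remarking that each diagram commutes pointwise.
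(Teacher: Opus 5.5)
Your proposal is correct and follows essentially the same route as the paper: the same block-sum isometry for $\amalg$, the same map $f\otimes g\mapsto \bigl((x,y)\mapsto f(x)g(y)\bigr)$, and the same computation showing it is an isometry. Your proof of bijectivity combines two facts: an isometry out of a nondegenerate form is injective, and $\dim_k D(X)=|X|$, which you get by reducing to $(k^s)^H$ for transitive $X$. This is a concrete version of the step the paper calls immediate from the Galois correspondence, and your treatment of units and coherence goes slightly beyond what the paper writes out.
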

\begin{proof} Seeing that $D(X\amalg Y) \cong D(X)\oplus D(Y)$ is immediate, since trace forms on a coproduct are given by a block sum. For the other monoidal structure, there is clearly a vector space homomorphism
    \begin{align*}
        D(X) \otimes D(Y) \to D(X\times Y),
    \end{align*}
    defined by
    \begin{align*}
        \Map_{G_s}(X,k^s) \otimes \Map_{G_s}(Y,k^s) & \to \Map_{G_s}(X\times Y, k^s)                                  \\
        (f,g)                                       & \mapsto \left[ (x,y) \xmapsto{f\times g} f(x)\cdot g(y)\right].
    \end{align*}
    The fact that this defines an isomorphism of $k$-vector spaces is immediate by the Galois correspondence. It therefore suffices to show that this map is a well-defined morphism in $\Unimod_k$. For this, we compute directly for $f_1,f_2\in \Map_{G_s}(X,k^s)$ and $h_1,h_2\in \Map_{G_s}(Y,k^s)$, that:
    \begin{align*}
        \langle f_1 \otimes h_1, f_2\otimes h_2\rangle & =
        \left( \sum_{x\in X} f_1(x)f_2(x)\right) \left( \sum_{y\in Y}h_1(y)h_2(y)\right)                        \\
                                                       & = \sum_{(x,y)\in X\times Y} f_1(x) h_1(y)f_2(x) h_2(y) \\
                                                       & = \langle f_1 \times h_1, f_2 \times h_2\rangle.\qedhere
    \end{align*}
\end{proof}

By \cite[11.6.12,~11.6.14]{JohnsonYau}, the $K$-theory of the domain and codomain of the Dress functor are $E_\infty$-ring spectra. We then obtain:

\begin{corollary}\label{cor:dress-map}
    Applying Elmendorf--Mandell $K$-theory to the Dress functor yields a map of $E_\infty$-rings which we call the \emph{Dress map}:
    \[
        D \colon K(\Fin_{G_s}) \to \KO(k).
    \]
\end{corollary}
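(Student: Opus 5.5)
Applying Elmendorf--Mandell $K$-theory to the strong bimonoidal Dress functor of the preceding proposition gives the map. First, both $(\Fin_{G_s}^\simeq,\amalg,\times)$ and $(\Unimod_k^\simeq,\oplus,\otimes)$ are tight symmetric bimonoidal categories (bipermutative after strictification). The distributivity isomorphisms for cartesian product over disjoint union, and for tensor over orthogonal direct sum, are the evident canonical ones, and the coherence diagrams of \cite[2.1.2]{JohnsonYau} hold because they hold already for the underlying sets and vector spaces; the bilinear forms are carried along by isometries. By \cite[11.6.12,~11.6.14]{JohnsonYau}, the $K$-theory of each is an $E_\infty$-ring spectrum. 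By definition $K(\Unimod_k^\simeq,\oplus)$ is $\KO(k)$, and we write $K(\Fin_{G_s})$ for $K(\Fin_{G_s}^\simeq,\amalg)$ with its multiplicative structure.

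Next, the preceding proposition shows that $D$ is strong on both monoidal structures. I would check that the comparison isomorphisms $D(X)\oplus D(Y)\cong D(X\amalg Y)$ and $D(X)\otimes D(Y)\cong D(X\times Y)$ are natural and compatible with units, associators, symmetries and distributors, so that $D$ is a strong (symmetric) bimonoidal functor in the sense used by Johnson--Yau. Naturality comes from the formula $(f,g)\mapsto f\times g$, which commutes with precomposition by isomorphisms of $G_s$-sets. Compatibility with the distributor reduces to the identity $f\cdot(g_1\amalg g_2)=(f\cdot g_1)\amalg(f\cdot g_2)$ of functions on $X\times(Y\amalg Z)$. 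Units are $D(\ast)=k$ with form $\langle 1\rangle$.

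The final step is functoriality of multiplicative $K$-theory: Elmendorf--Mandell $K$-theory takes strong (or even lax) bimonoidal functors between such categories to maps of $E_\infty$-ring spectra. I would either apply this directly, or first replace $D$ by a strict bipermutative functor via the Johnson--Yau strictification and then use the Elmendorf--Mandell multifunctor from permutative categories to spectra. The main obstacle is this coherence and strictification bookkeeping. The danger is that strictification changes the categories only up to equivalence, so the resulting map is well defined only up to contractible choice. I would handle this by citing the Johnson--Yau result that strictification is functorial on strong bimonoidal functors, rather than verifying each coherence diagram by hand.
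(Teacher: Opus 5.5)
Your proposal is correct and follows the paper's route. The paper likewise obtains the corollary directly from the strong bimonoidality of the Dress functor between the tight symmetric bimonoidal groupoids $(\Fin_{G_s}^\simeq,\amalg,\times)$ and $(\Unimod_k^\simeq,\oplus,\otimes)$, and cites Johnson--Yau (11.6.12, 11.6.14) for the fact that Elmendorf--Mandell $K$-theory yields $E_\infty$-ring spectra and the induced map. The only difference is that you spell out the coherence checks (naturality, units, distributors) and the strictification bookkeeping, which the paper leaves entirely to that citation.
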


Suppose now that $L/k$ is a finite Galois field extension, corresponding to a normal subgroup $N\trianglelefteq G_s$ with quotient $G:=G_s/N$. This defines an embedding
\[
    \Fin_G \to \Fin_{G_s}
\]
(equivalently, the inclusion of the subcategory of finite $G_s$-sets on which $N$ acts trivially) which can easily be seen to preserve the monoidal structures above on groupoid cores. Therefore we can obtain a ``restricted'' Dress map
\[
    D_{L/k} \colon K(\Fin_G) \to \KO(k),
\]
and by an equivariant version of the Barratt--Priddy--Quillen theorem, the domain of this map is the $G$-fixed points of the equivariant sphere spectrum $\mathbb{S}_G^G$. The following is immediate:

\begin{proposition}\label{prop:pi0D}
    For a $G$-Galois extension $L/k$, the Dress map on $\pi_0$ is of the form
    \[
        \pi_0 D_{L/k} \colon A(G) \to \GW(k),
    \]
    and is a ring homomorphism.
\end{proposition}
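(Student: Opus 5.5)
The plan is to obtain the statement by applying $\pi_0$ to the restricted Dress map $D_{L/k}\colon K(\Fin_G)\to \KO(k)$ and identifying both sides. First, I would identify the domain. For a tight symmetric bimonoidal groupoid $\mathcal{C}$, $\pi_0$ of its $K$-theory spectrum is the group completion of the commutative semiring $(\pi_0\mathcal{C}, \amalg, \times)$. This follows from the group completion theorem for $K$-theory of symmetric monoidal categories, and the $E_\infty$-ring structure of \cite{JohnsonYau} induces on $\pi_0$ the multiplication coming from the second monoidal product. Applied to $\Fin_G^\simeq$, this gives $\pi_0 K(\Fin_G) = K_0(\pi_0\Fin_G^\simeq,\amalg,\times) = A(G)$, which is exactly the definition of the Burnside ring given in \Cref{sec:operations}. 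Applied to $\Unimod_k^\simeq$, it gives the group completion of isometry classes of nondegenerate symmetric bilinear forms under $\oplus$ and $\otimes$, which is the definition of $\GW(k) = \pi_0\KO(k)$.

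Second, I would note that a map of $E_\infty$-ring spectra induces a ring homomorphism on $\pi_0$. By \Cref{cor:dress-map} together with the observation that $\Fin_G\to\Fin_{G_s}$ preserves both monoidal structures on cores, $D_{L/k}$ is such a map, so $\pi_0 D_{L/k}$ is a ring homomorphism $A(G)\to\GW(k)$.

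Alternatively, and more elementarily, one can bypass spectra entirely. The strong bimonoidality of the Dress functor (the preceding proposition) says directly that $X\mapsto[\Map_{G}(X,L)]$, composed with $\Fin_G\subseteq\Fin_{G_s}$, is a semiring homomorphism $\pi_0\Fin_G^\simeq\to\pi_0\Unimod_k^\simeq$. It sends $G/G$ to $\langle 1\rangle$ because $\Map_{G_s}(\ast,k^s)=k$ with form $(a,b)\mapsto ab$. The universal property of group completion then extends this uniquely to a ring homomorphism $A(G)\to\GW(k)$. Checking that this agrees with $\pi_0$ of the spectrum-level map just uses the naturality of the identification of $\pi_0$ with group completion.

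The only step that needs care is the first: that the ring structure on $\pi_0$ of Elmendorf--Mandell $K$-theory agrees with the one induced by the bimonoidal structure, and that $\pi_0$ of $K$ of the core recovers the group completion. I would cite the standard results in \cite{JohnsonYau} for this, and fall back on the elementary semiring argument above if a precise reference is awkward.
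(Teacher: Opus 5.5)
Your proposal is correct and matches the paper, which treats the statement as immediate: apply $\pi_0$ to the $E_\infty$-ring map $D_{L/k}$ of \Cref{cor:dress-map}, restricted along $\Fin_G\to\Fin_{G_s}$, then identify $\pi_0 K(\Fin_G)$ with $A(G)=K_0(\pi_0\Fin_G^\simeq,\amalg,\times)$ (the paper phrases this via equivariant Barratt--Priddy--Quillen) and $\pi_0\KO(k)$ with $\GW(k)$. Your elementary fallback, using strong bimonoidality of the Dress functor and the universal property of group completion, is also valid and suffices on its own for the $\pi_0$ statement.
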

It can also be seen that $D_{L/k}$ is nontrivial on $\pi_1$ in general. We reserve discussion of this fact for future work.

\subsection{Pre-\texorpdfstring{$\lambda$}{λ} Structures on the Grothendieck--Witt Ring}
A very natural question to ask is whether the Dress map in \Cref{prop:pi0D} is interpretable as a pre-$\lambda$ ring homomorphism, which we address in the next subsection. We first investigate which natural pre-$\lambda$ ring structures arise on the Grothendieck--Witt ring of a field.

There are immediate exterior and symmetric operations, which were explored by McGarraghy, and defined as follows:
\begin{proposition}[{\cite{mcgarraghyExteriorPowersSymmetric2002,mcgarraghySymmetricPowersSymmetric2005}}] We can define exterior and symmetric operations
    \begin{align*}
        \epsilon^k(\langle a_1,\ldots,a_n\rangle) & = \sum_{i_1 < i_2 < \cdots < i_k} \langle a_{i_1}\cdots a_{i_k}\rangle        \\
        \sigma^k(\langle a_1,\ldots,a_n\rangle)   & = \sum_{i_1 \le i_2 \le \cdots \le i_k} \langle a_{i_1}\cdots a_{i_k}\rangle,
    \end{align*}
    which yield pre-$\lambda$ structures on $\GW(k)$ which are dual to one another.
\end{proposition}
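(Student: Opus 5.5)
The plan is to define both operations on diagonal forms via generating functions, check that they descend to $\GW(k)$, and then read off the axioms and duality from the generating functions. For a diagonal form $q=\langle a_1,\ldots,a_n\rangle$ set
\[
\epsilon_t(q)=\prod_{i=1}^n(1+\langle a_i\rangle t),\qquad \sigma_t(q)=\prod_{i=1}^n(1-\langle a_i\rangle t)^{-1}=\prod_{i=1}^n\sum_{m\ge0}\langle a_i\rangle^m t^m,
\]
computed in $\GW(k)\fps{t}$. Expanding the products recovers exactly the stated formulas, because $\langle a\rangle\langle b\rangle=\langle ab\rangle$ in $\GW(k)$. These expressions are visibly symmetric in the $a_i$ and depend only on the classes $\langle a_i\rangle$, that is, on the $a_i$ modulo squares. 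They are also multiplicative under orthogonal sum: $\epsilon_t(q\perp q')=\epsilon_t(q)\epsilon_t(q')$, and likewise for $\sigma_t$.

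The main obstacle is well-definedness, since a form has many diagonalizations. I would invoke Witt's chain equivalence theorem: any two diagonalizations of the same form are connected by a chain of moves, each changing only two entries, $\langle a,b\rangle\simeq\langle c,d\rangle$. Multiplicativity under orthogonal sum then reduces the problem to the binary case. There $\epsilon_t(\langle a,b\rangle)=1+\langle a,b\rangle t+\langle ab\rangle t^2$, which depends only on the isometry class, because the degree-1 coefficient is the form itself and $ab$ is its discriminant (an isometry invariant modulo squares). For $\sigma_t$ the coefficient of $t^m$ is $\sum_{j=0}^m\langle a^jb^{m-j}\rangle$. Its terms pair off so that it equals $\sum\langle ab\rangle^{j}\langle a,b\rangle$ when $m$ is odd, and the same sum plus a term $\langle ab\rangle^{m/2}$ when $m$ is even. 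Again only $\langle a,b\rangle$ and its discriminant appear, so $\sigma_t$ is well defined too.

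With well-definedness on isometry classes of forms in hand, the rest is routine. Multiplicativity gives a monoid homomorphism from $(\pi_0\Unimod_k^\simeq,\oplus)$ to the multiplicative group $1+t\GW(k)\fps{t}$. By the universal property of group completion this extends uniquely to $\GW(k)$, and this extension is PL3. PL1 and PL2 are read off the constant and linear coefficients.

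Duality follows from the product formulas:
\[
\epsilon_{-t}(q)\,\sigma_t(q)=\prod_i(1-\langle a_i\rangle t)(1-\langle a_i\rangle t)^{-1}=1,
\]
so $\sigma_t=\epsilon_{-t}^{-1}=\epsilon_t^\star$ on forms. By group completion the identity holds on all of $\GW(k)$, and \Cref{prop:star-duality} then gives $\epsilon_t=\sigma_t^\star$ as well.
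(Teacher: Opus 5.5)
The paper gives no proof of this statement; it cites McGarraghy. There the exterior power of a bilinear space $(V,b)$ is built intrinsically as $\Lambda^kV$ with the Gram-determinant form $\langle v_1\wedge\cdots\wedge v_k, w_1\wedge\cdots\wedge w_k\rangle=\det\bigl(b(v_i,w_j)\bigr)$. Invariance under isometry is then automatic by functoriality, and the diagonal formula and the law $\Lambda^k(V\perp W)\cong\bigoplus_j\Lambda^jV\otimes\Lambda^{k-j}W$ are read off from an orthogonal basis.

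You instead work with diagonalizations and pay for well-definedness with Witt's chain equivalence theorem, reducing to binary forms where rank and discriminant suffice. This is a correct, self-contained argument. The intrinsic route gives actual functors on $\Unimod_k^{\simeq}$, closer to the paper's categorical set-up. Your route has the advantage for $\sigma$ of landing exactly on the stated formula. The naive intrinsic symmetric power, with the permanent form on $\Sym^kV$, assigns $\langle m_1!\cdots m_n!\,a_1^{m_1}\cdots a_n^{m_n}\rangle$ to a monomial. That gives $\langle 2\rangle$ rather than $\langle 1\rangle$ for $\sigma^2\langle a\rangle$, and a degenerate form once some $m_i$ reaches the characteristic.

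One slip to fix: in your binary computation for $\sigma_t$ with $m$ even, the paired terms are
\[
\langle a^jb^{m-j}\rangle+\langle a^{m-j}b^j\rangle=\langle ab\rangle^j\bigl(\langle a^{m-2j}\rangle+\langle b^{m-2j}\rangle\bigr)=2\langle ab\rangle^j,
\]
not $\langle ab\rangle^j\langle a,b\rangle$, because $m-2j$ is even. For example, at $m=2$ the coefficient is $2+\langle ab\rangle$, not $\langle a,b\rangle+\langle ab\rangle$. Your conclusion that only $\langle a,b\rangle$ and its discriminant enter still holds, so nothing breaks.

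The computation is also unnecessary. Already on diagonal tuples, $\sigma_t(\langle a_1,\ldots,a_n\rangle)=\epsilon_{-t}(\langle a_1,\ldots,a_n\rangle)^{-1}$. So well-definedness of $\sigma_t$ follows at once from that of $\epsilon_t$, and this identity is exactly the duality you prove at the end.
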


However these don't appear to be related to the Dress map in any natural way. Work of Pajwani--Pál defines an alternative pre-$\lambda$ structure $a_t$ which is compatible with the symmetric operations on finite \'etale $k$-algebras (equivalently, finite $G$-sets).

\begin{notation}[{\cite{PajwaniPal}}] For a field $k$ and a unit $\alpha \in k^\times$, we denote by $\tau_\alpha$ the element\footnote{This quantity $\tau_\alpha$ is denoted by $t_\alpha$ in \cite{PajwaniPal} --- we use $\tau$ so as not to conflate it with the variable $t$.}
    \begin{align*}
        \langle 2,\alpha\rangle - \langle 1,2\alpha \rangle \in \GW(k).
    \end{align*}
\end{notation}

\begin{theorem}[\cite{PajwaniPal}]
    \label{thm:PP}
    There is a well-defined pre-$\lambda$ structure $a_t$ on $\GW(k)$, defined on rank one forms by
    \begin{align*}
        a_t(\langle \alpha \rangle) = \frac{1}{1 - \langle \alpha \rangle t} + \tau_\alpha \frac{t^2}{(1-t)^3}.
    \end{align*}
    Moreover if $\text{char}(k)\ne 2$, then for any finite $G$-Galois extension $L/k$, the Dress map
    \begin{align*}
        \pi_0 D_{L/k} \colon (A(G),\sigma_t) \to (\GW(k),a_t)
    \end{align*}
    is a morphism of pre-$\lambda$ rings.
\end{theorem}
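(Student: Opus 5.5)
Two things need proof: that $a_t$ is a well-defined pre-$\lambda$ structure on $\GW(k)$, and that $\pi_0 D_{L/k}$ intertwines $\sigma_t$ with $a_t$.

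\textbf{Well-definedness.} Each $a_t(\langle\alpha\rangle)$ has constant term $1$, so it is a unit in $\GW(k)\fps{t}$. Since PL3 asks $a_t$ to be additive-to-multiplicative, I would define $a_t$ on diagonal forms by
\[
a_t(\langle a_1,\ldots,a_n\rangle)=\prod_i a_t(\langle a_i\rangle).
\]
This extends to differences by inversion, once I check that it respects the relations presenting $\GW(k)$. By Witt's presentation, $\GW(k)$ is the free abelian group on symbols $\langle\alpha\rangle$, modulo three relations:
\begin{itemize}
\item $\langle\alpha u^2\rangle=\langle\alpha\rangle$;
\item $\langle a\rangle+\langle b\rangle=\langle a+b\rangle+\langle ab(a+b)\rangle$ whenever $a+b\neq0$;
\item $\langle a\rangle+\langle -a\rangle=\langle1\rangle+\langle-1\rangle$.
\end{itemize}
The first relation is immediate, since $\tau_\alpha$ depends only on the square class of $\alpha$. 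For the other two I would multiply out both sides. Each side has the form $\frac{1}{(1-\langle a\rangle t)(1-\langle b\rangle t)}$ plus $\tau$-correction terms, and I would compare the two sides coefficientwise in $t$.

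This comparison needs identities for the $\tau_\alpha$ in $\GW(k)$, such as $\langle\beta\rangle\tau_\alpha$, $\tau_\alpha\tau_\beta$, and $\tau_a+\tau_b$ versus $\tau_{a+b}+\tau_{ab(a+b)}$. All of these come from the Witt relations applied to $\langle2,\alpha\rangle-\langle1,2\alpha\rangle$. This verification is where I expect the main obstacle to lie. The honest approach is a careful computation with the relation $\langle x\rangle+\langle y\rangle=\langle x+y\rangle+\langle xy(x+y)\rangle$. PL1 and PL2 are then read off from the first two coefficients: the constant term is $1$, and the coefficient of $t$ is $\langle\alpha\rangle$, because the $\tau$ term starts at $t^2$.

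\textbf{Compatibility with the Dress map.} Both $\sigma_t$ and $a_t$ turn sums into products, and $\pi_0D_{L/k}$ is a ring map. So it suffices to check $D(\sigma_t(X))=a_t(D(X))$ for transitive $X=G/H$, and in fact for an arbitrary finite $G_s$-set $X$ with associated étale algebra $E$. Here $\sigma^n(X)=X^n/\Sigma_n$ corresponds to the étale algebra $(E^{\otimes n})^{\Sigma_n}$, and $D(X)=\tr_{E/k}$. This reduces the compatibility to a formula for the trace form of the symmetric power algebra in terms of $\tr_{E/k}$.

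To get that formula, I would pass to $k^s$, where $E\otimes k^s\cong (k^s)^X$. A basis of $\Map_{G_s}(X^n/\Sigma_n,k^s)$ is indexed by the multisets of $X$, that is, by the orbits of $\Sigma_n$ on $X^n$. The form \eqref{eqn:trace-form-on-map-X} on these basis elements is computed by counting points, and a multiset with stabilizer type $\mu$ contributes a factor equal to the size of its $\Sigma_n$-orbit. Descending to $k$ then amounts to computing orbit sums, twisted by discriminant-type classes coming from the factor $2$ in the repeated-pair orbits. That twist is exactly where the $\langle2,\alpha\rangle$ in $\tau_\alpha$ arises.

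Concretely, I would first prove the identity when $\tr_{E/k}$ is diagonalized as $\langle\alpha_1,\ldots,\alpha_m\rangle$. Since both sides of the compatibility depend only on the class of $D(X)$ in $\GW(k)$, once well-definedness is known, this reduces to checking the rank-one generating function, which I would verify through degree $2$ by direct computation and then for all degrees by the product structure.
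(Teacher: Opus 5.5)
The paper does not prove this statement (it is quoted from Pajwani--P\'al), so your argument is judged on its own terms.

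Your plan for well-definedness is sound, and easier than you fear. One has $\tau_\alpha=(\langle 2\rangle-1)(1-\langle\alpha\rangle)$, and $\langle 2,2\rangle\cong\langle 1,1\rangle$ gives $(\langle 2\rangle-1)^2=0$. So $\tau_\alpha$ is linear in $\langle\alpha\rangle$, $2$-torsion, satisfies $\langle 2\rangle\tau_\alpha=\tau_\alpha$, and $\tau_\alpha\tau_\beta=0$. Hence $a_t(\langle a\rangle)a_t(\langle b\rangle)$ depends only on $\langle a\rangle+\langle b\rangle$ and $\langle ab\rangle$, which both Witt relations preserve.

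The genuine gap is the compatibility with the Dress map. Your final reduction says both sides depend only on the class of $D(X)$. Well-definedness of $a_t$ gives this only for $a_t(D(X))$. That $\Tr((E^{\otimes n})^{\Sigma_n})$ depends only on $\tr_{E/k}$ is the real content of the theorem; it is the symmetric analogue of the paper's corollary on binomial coefficients, which the paper credits to Serre. So this step is circular. Even granting it, the reduction to rank one is ill-posed. A diagonalization $\langle\alpha_1,\ldots,\alpha_m\rangle$ of $\tr_{E/k}$ does not come from a decomposition of $X$ into $G$-stable pieces. No finite $G$-set $Y$ has $D(Y)=\langle\alpha\rangle$ with $\alpha$ a nonsquare, since the only rank-one \'etale algebra is $k$. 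So there is no left-hand side to compare with $a_t(\langle\alpha\rangle)$. Also, PL3 only splits sums into summands; it cannot propagate a degree-$2$ check to all degrees. The orbit-counting sketch does not fill the hole either. For $X=G/H$ the $G$-orbits on multisets are not controlled. Moreover, in the trace form on $\Map_{G_s}(X^n/\Sigma_n,k^s)$ the point basis over $k^s$ is orthonormal. Orbit-size weights arise only if you restrict the trace form of $E^{\otimes n}$, which is not the trace form of the subalgebra.

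The missing ingredient is a splitting principle. Fix $m,n$. Both $E\mapsto\Tr(\Sym^n E)$ and $E\mapsto a^n(\tr_E)$ are Grothendieck--Witt invariants of degree-$m$ \'etale algebras, natural in field extensions $K/k$ and of the same rank $\binom{m+n-1}{n}$. By Serre's theorem \cite[\S29]{garibaldiCohomologicalInvariantsGalois2003}, such an invariant is determined by its values on multiquadratic algebras $K_{a_1}\times\cdots\times K_{a_r}\times K^{\epsilon}$. Both sides turn products of algebras into products of generating functions, so it suffices to treat $E=K$ and $E=K_a$. For $E=K$, both sides equal $1/(1-t)$.

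For $K_a$, the multisets $x^iy^{n-i}$ are permuted by $i\leftrightarrow n-i$. Hence $\Tr(\Sym^nK_a)=\lfloor\frac{n+1}{2}\rfloor\langle 2,2a\rangle$, plus $\langle 1\rangle$ when $n$ is even. On the other side, $\tau_2=0$ and $\langle 2\rangle\tau_{2a}=\tau_{2a}$ give $a_t(\langle 2\rangle)a_t(\langle 2a\rangle)=\frac{1}{(1-\langle 2\rangle t)(1-\langle 2a\rangle t)}+\tau_{2a}\frac{t^2}{(1-t)^4}$. Comparing $t^n$-coefficients reduces to a parity fact: $\binom{n+1}{3}\equiv n/2\pmod 2$ for $n$ even, and $\binom{n+1}{3}$ is even for $n$ odd. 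This argument is what your proposal needs in place of the rank-one reduction.
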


In attempting to contrast this result with the quadratic binomial coefficients emerging in quadratically enriched Gromov--Witten theory, we reckon with the same issue as before, namely that the exterior and symmetric structures on the Burnside ring are not dual. Nevertheless, we can define a new pre-$\lambda$ structure on $\GW(k)$ which is compatible with exterior operations!

\subsection{The Dress Map as a Pre-\texorpdfstring{$\lambda$}{λ} Ring Homomorphism}

\begin{definition}\label{def:bt-power-structure} We define a new pre-$\lambda$ structure on $\GW(k)$, denoted $b_t$, by the formula
    \begin{equation}\label{eqn:bt-in-terms-of-at}
        b_t(x) = \frac{a_t(x)}{a_{t^2}(x)}.
    \end{equation}
\end{definition}
On rank one forms, this is defined as:
\begin{align*}
    b_t(\langle \alpha \rangle) = \frac{(t+1)^2(\langle \alpha \rangle t^2-1 )( (1+ \tau_\alpha)t^2 - 2t + 1)}{( \langle \alpha \rangle t - 1)( (1+\tau_\alpha)t^4 - 2t^2 + 1)}.
\end{align*}

\begin{proposition}\label{prop:bt-is-pre-l-structure}
    The formula $b_t$ as above gives a well-defined pre-$\lambda$ structure on $\GW(k)$ for $\text{char}(k)\ne 2$.
\end{proposition}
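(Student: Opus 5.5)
We need to show that $b_t(x) = a_t(x)/a_{t^2}(x)$ satisfies (PL1)–(PL3). The plan is to check each axiom directly from the corresponding property of $a_t$, using \Cref{thm:PP}.

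\textbf{Well-definedness.} First observe that $a_t(x)\in 1+t\,\GW(k)\fps{t}$ for every $x$, since $a_t$ is a pre-$\lambda$ structure and so $a^0(x)=1$. Substituting $t\mapsto t^2$ is a continuous ring endomorphism of $\GW(k)\fps{t}$. It therefore sends $a_t(x)$ to a power series $a_{t^2}(x)\in 1+t^2\,\GW(k)\fps{t}$, which is a unit in $\GW(k)\fps{t}$. Hence $b_t(x)$ is a well-defined element of $1+t\,\GW(k)\fps{t}$, and its constant term is $1$, giving (PL1).

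\textbf{(PL3).} The map $x\mapsto a_t(x)$ is a group homomorphism from $(\GW(k),+)$ to the multiplicative group $1+t\,\GW(k)\fps{t}$. Composing with the substitution $t\mapsto t^2$ shows that $x\mapsto a_{t^2}(x)$ is also such a homomorphism. The target group is abelian, so the quotient of two homomorphisms is again a homomorphism. This gives $b_t(x+y)=b_t(x)b_t(y)$.

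\textbf{(PL2).} Write $a_t(x)=1+xt+a^2(x)t^2+\cdots$, so that $a_{t^2}(x)^{-1}=1-xt^2+O(t^4)$. Multiplying, the coefficient of $t$ in $b_t(x)$ is $x$.

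\textbf{The rank one formula.} Finally, one checks that the stated closed form on $\langle\alpha\rangle$ agrees with the definition. Expand
\[
a_t(\langle\alpha\rangle)=\frac{1}{1-\langle\alpha\rangle t}+\tau_\alpha\frac{t^2}{(1-t)^3}
\]
over a common denominator, do the same with $t$ replaced by $t^2$, and form the quotient. This is routine algebra, made easier by the identity $\langle\alpha\rangle^2=1$. The right-hand side of the definition already pins $b_t$ down, so this computation is only a consistency check.

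There is no real obstacle. The only point needing care is that $a_{t^2}(x)$ is invertible, which is exactly why it matters that $a_t$ has constant term $1$. If a further property is intended, such as $b_t$ being exterior via the rank augmentation, the same method applies: the rank map sends $a_t$ to $(1-t)^{-n}$, hence sends $b_t$ to $(1-t^2)^n/(1-t)^n=(1+t)^n$, the binomial structure.
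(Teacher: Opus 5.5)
Your verification of (PL1)--(PL3) is correct and is the paper's argument: the paper simply says the axioms follow immediately from $b_t = a_t/a_{t^2}$, and you have spelled out why $a_{t^2}(x)$ is invertible and why a quotient of two additive-to-multiplicative homomorphisms is again one. One caveat on your side remark, which this proposition does not need: the displayed rank-one closed form agrees with $a_t(\langle\alpha\rangle)/a_{t^2}(\langle\alpha\rangle)$ only after using $\langle\alpha\rangle\tau_\alpha=\tau_\alpha$ (from $\langle\alpha\rangle\tau_\alpha=-\tau_\alpha$ and $2\tau_\alpha=0$), not just $\langle\alpha\rangle^2=1$.
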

\begin{proof} PL1-PL3 follow immediately from \Cref{eqn:bt-in-terms-of-at}.
\end{proof}

As a result, we obtain our main result:

\begin{theorem}\label{thm:main} Let $k$ be a field of characteristic $\ne 2$, and let $L/k$ be a finite Galois extension of $k$ with Galois group $G$. Then the Dress map
    \begin{align*}
        \pi_0 D \colon (A(G),\epsilon_t) \to (\GW(k),b_t)
    \end{align*}
    is a well-defined pre-$\lambda$ ring homomorphism.
\end{theorem}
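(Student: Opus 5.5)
The plan is to deduce the statement from \Cref{thm:PP} by establishing a purely Burnside-ring identity relating $\epsilon_t$ and $\sigma_t$:
\[
    \epsilon_t(x) = \frac{\sigma_t(x)}{\sigma_{t^2}(x)} \qquad\text{in } A(G)\fps{t}, \text{ for all } x\in A(G).
\]
Suppose this is known. Write $D=\pi_0 D_{L/k}$ and extend it coefficientwise to a ring map $A(G)\fps{t}\to\GW(k)\fps{t}$; this extension commutes with the substitution $t\mapsto t^2$. By \Cref{thm:PP} we have $D(\sigma_t(x)) = a_t(D(x))$, and substituting $t^2$ gives $D(\sigma_{t^2}(x)) = a_{t^2}(D(x))$. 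The denominators have constant term $1$, so they are invertible, and we get
\[
    D(\epsilon_t(x)) = \frac{a_t(D(x))}{a_{t^2}(D(x))} = b_t(D(x))
\]
by \Cref{def:bt-power-structure}. This is exactly the statement that $D$ is a pre-$\lambda$ homomorphism, since $D$ is already a ring map by \Cref{prop:pi0D}.

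It therefore remains to prove the displayed identity, equivalently $\epsilon_t(x)\,\sigma_{t^2}(x) = \sigma_t(x)$.

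\textbf{Reduction to finite $G$-sets.} Both sides are multiplicative in $x$ by PL3 for $\epsilon$ and $\sigma$. They also take values in the units of $A(G)\fps{t}$. Hence it suffices to check the identity for an honest finite $G$-set $X$.

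\textbf{Proof for a finite $G$-set $X$.} Compare coefficients of $t^n$ and exhibit an equivariant bijection
\[
    \coprod_{j+2m=n}\binom{X}{j}\times \sigma^m(X) \;\cong\; \sigma^n(X).
\]
Here an element of $\sigma^n(X)$ is a multiset of size $n$ on $X$, i.e.\ a function $c\colon X\to\mathbb{N}$ with total mass $n$. Every such $c$ decomposes uniquely as $c = \mathbf{1}_S + 2c'$, where $S=\{x : c(x)\text{ odd}\}$ and $c' = \lfloor c/2\rfloor$. Then $|S| = j$ and $c'$ has total mass $m$ with $j+2m = n$. This decomposition is canonical, so it commutes with the $G$-action, and it gives the required isomorphism of $G$-sets, hence an equality in $A(G)$.

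\textbf{Expected obstacle.} The main difficulty is guessing the identity $\epsilon_t = \sigma_t/\sigma_{t^2}$ in the first place; once it is in hand, the bijection is routine. A second point needs care: the identity must hold in $A(G)$ itself, not merely after passing to marks. The equivariant bijection provides exactly this. As a sanity check, one can apply $\phi^H$ to reduce to counting $H$-fixed multisets.
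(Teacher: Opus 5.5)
Your proof is correct. Its outer structure matches the paper's: extend $\pi_0 D$ coefficientwise, apply \Cref{thm:PP} at $t$ and at $t^2$, and divide. The difference is in how you establish the key identity $\epsilon_t = \sigma_t/\sigma_{t^2}$ on $A(G)$.

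The paper checks the identity in the ghost ring. For a finite $G$-set $X$ and $H\le G$ it records $\phi^H\epsilon_t(X) = \prod_{O\in H\backslash X}(1+t^{\#O})$ and $\phi^H\sigma_t(X)=\prod_{O\in H\backslash X}(1-t^{\#O})^{-1}$. The identity then reduces orbit by orbit to $(1-t^{2a})/(1-t^{a}) = 1+t^{a}$, and injectivity of the mark homomorphism $\phi$ finishes the argument.

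You instead categorify the identity. The parity decomposition $c=\mathbf{1}_S+2c'$ gives an explicit $G$-isomorphism $\coprod_{j+2m=n}\binom{X}{j}\times\sigma^m(X)\cong\sigma^n(X)$. The mark computation is shorter once the theory of marks is available. Your argument is more elementary and explains where the identity comes from. Being natural in $X$ with respect to isomorphisms, it also holds verbatim for any group acting on finite sets (for instance continuous $G_s$-sets, i.e.\ at the level of étale algebras), without appeal to the structure of $A(G)$.

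One small correction to your closing remark. Checking the identity after applying every $\phi^H$ is not merely a sanity check: since $\phi$ is injective, it is a complete proof of the identity in $A(G)$ itself, and it is exactly the argument the paper gives.
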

\begin{proof} In characteristic not equal to $2$, we have by \Cref{thm:PP} that the Dress map is a morphism of pre-$\lambda$ rings with the symmetric and Pajwani--P\'al power structures:
    \[
        \pi_0 D \colon (A(G),\sigma_t) \to (\GW(k),a_t).
    \]
    We claim on the Burnside ring that $\epsilon_t = \sigma_t/\sigma_{t^2}$. For any finite $G$-set $X$ and any subgroup $H\le G$, we can compute the $H$-marks of $\epsilon_t(X)$ and $\sigma_t(X)$ as
    \begin{align*}
        \phi^H \epsilon_t(X) & = \prod_{O \in H\backslash X}(1 + t^{\# O})        \\
        \phi^H \sigma_t(X)   & = \prod_{O \in H\backslash X} (1 - t^{\# O})^{-1},
    \end{align*}
    from which the result follows.
\end{proof}

In quadratically enriched Gromov--Witten theory, Brugall\'e and Wickelgren have defined \emph{quadratically enriched binomial coefficients} for a finite \'etale $k$-algebra $E$. Explicitly, given $E\in \FEt_k$ corresponding to a finite $G_s$-set $X$ and $j\ge0$, they write $\binom{E/k}{j} \in \GW(k)$ to denote the trace form of the finite \'etale $k$-algebra associated to the $G_s$-set $\binom{X}{j} = \epsilon^j(X)$ \cite{BrugalleWickelgren,brugalleWelschingerWittInvariants2025}. Our pre-$\lambda$ structure, when evaluated on the trace form of $E$, therefore has the following interpretation in terms of quadratically enriched binomial coefficients:
\begin{align*}
    b_t(\tr_E) = \sum_{j\ge 0} \binom{E/k}{j} t^j.
\end{align*}
As an immediate corollary, we obtain a new way to see the following, which is a corollary of a result of Serre regarding Witt invariants \cite[29.2]{garibaldiCohomologicalInvariantsGalois2003}.

\begin{corollary}
For any field $k$ of characteristic $\ne 2$, pick any two \'etale $k$-algebras $E_1$ and $E_2$ with the property that their trace forms $\tr_{E_1}$ and $\tr_{E_2}$ are equal in $\GW(k)$. Then for any $j$, we have that
    \begin{align*}
        \binom{E_1/k}{j} = \binom{E_2/k}{j} \text{ in }\GW(k).
    \end{align*}
\end{corollary}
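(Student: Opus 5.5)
The plan is to use \Cref{thm:main} to write the quadratically enriched binomial coefficients as values of the pre-$\lambda$ structure $b_t$, which is defined on $\GW(k)$ itself and so sees only the class of the trace form. Two reductions are needed first.

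First, reduce to a single finite Galois extension. Each $E_i$ corresponds to a finite continuous $G_s$-set $X_i$. The intersection of the stabilizers of points of $X_1 \amalg X_2$ contains an open normal subgroup $N \trianglelefteq G_s$. Let $L = (k^s)^N$; this is a finite Galois extension with group $G = G_s/N$, and both $X_1$ and $X_2$ lie in the image of $\Fin_G \to \Fin_{G_s}$. The operation $\binom{-}{j}$ commutes with this inclusion, since $N$ still acts trivially on $j$-element subsets. The Dress functor restricts compatibly, so for $X \in \Fin_G$ we get
\[
\binom{E/k}{j} = \pi_0 D_{L/k}\bigl(\epsilon^j(X)\bigr), \qquad \tr_E = \pi_0 D_{L/k}(X).
\]

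Second, apply \Cref{thm:main}. Since $\pi_0 D_{L/k}$ is a pre-$\lambda$ homomorphism $(A(G),\epsilon_t) \to (\GW(k),b_t)$,
\[
\binom{E_i/k}{j} = \pi_0 D_{L/k}\bigl(\epsilon^j(X_i)\bigr) = b^j\bigl(\pi_0 D_{L/k}(X_i)\bigr) = b^j(\tr_{E_i}).
\]
This is the displayed identity $b_t(\tr_E)=\sum_j \binom{E/k}{j}t^j$ preceding the corollary. Since $b^j$ is a function on $\GW(k)$ and $\tr_{E_1}=\tr_{E_2}$ there, we get $b^j(\tr_{E_1}) = b^j(\tr_{E_2})$, and the corollary follows.

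The only point needing care is the first reduction: checking that binomials and the Dress functor commute with the inflation $\Fin_G \hookrightarrow \Fin_{G_s}$, so that \Cref{thm:main}, stated for a fixed finite Galois $L/k$, applies simultaneously to $E_1$ and $E_2$. Both compatibilities hold by construction. Even so, I would state them explicitly, because the corollary is phrased for arbitrary étale algebras and not for a fixed Galois group.
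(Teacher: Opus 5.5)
Your proposal is correct and follows the paper's argument: the paper reads the corollary off the identity $b_t(\tr_E)=\sum_{j\ge 0}\binom{E/k}{j}t^j$, which comes from \Cref{thm:main}, so $\binom{E_i/k}{j}=b^j(\tr_{E_i})$ depends only on the class of $\tr_{E_i}$ in $\GW(k)$. Your explicit reduction to a finite Galois extension makes precise what the paper leaves implicit, but you do not need a \emph{common} $L$ for both algebras: $b_t$ is defined on $\GW(k)$ without reference to any $L$, so each $E_i$ can be handled with its own splitting field.
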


In fact, the pre-$\lambda$ structure $b_t$ turns out to be very computable on rank one forms, as it is eventually 4-periodic:

\begin{proposition}\label{prop:periodicity-of-bn}
    On rank one forms, our pre-$\lambda$ structure $b$ satisfies:
    \begin{align*}
        b^n(\langle \alpha \rangle) = \begin{cases}
                                          1                                           & n=0                          \\
                                          \langle \alpha \rangle                      & n=1                          \\
                                          \langle 2 \rangle - \langle 2\alpha \rangle & n\equiv 2\pmod{4},\ n\ge 2   \\
                                          \langle 2\alpha\rangle - \langle 2 \rangle  & n\equiv 3\pmod{4},\ n\ge 2   \\
                                          1 - \langle \alpha \rangle                  & n\equiv 0 \pmod{4},\ n\ge 2  \\
                                          \langle \alpha \rangle - 1                  & n\equiv 1 \pmod{4},\ n\ge 2.
                                      \end{cases}
    \end{align*}
\end{proposition}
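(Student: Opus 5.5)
The plan is a direct power-series computation in $\GW(k)\fps{t}$, starting from the closed formula for $b_t(\langle\alpha\rangle)$ stated just after \Cref{def:bt-power-structure}. Write $u=\langle\alpha\rangle$ and $\tau=\tau_\alpha$. The first step is to record the algebraic relations among $u$ and $\tau$ in $\GW(k)$ that will drive all simplifications.
\begin{itemize}
\item $u^2=1$.
\item $\tau=\langle 2\rangle+u-1-\langle 2\rangle u=(\langle 2\rangle-1)(1-u)$.
\item $u\tau=-\tau$, since $u(1-u)=u-1$.
\item Using $\langle 2\rangle^2=1$, we get $(\langle 2\rangle-1)^2=-2(\langle 2\rangle-1)$ and $(1-u)^2=2(1-u)$. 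Hence $\tau^2=-4\tau$.
\end{itemize}
The target values can also be rewritten in this basis: $\langle 2\rangle-\langle 2\alpha\rangle=\langle 2\rangle(1-u)=\tau+1-u$. So every case of the claim reads as a specific combination of $1$, $u$ and $\tau$, namely $\pm(\tau+1-u)$ or $\pm(1-u)$. The statement thus reduces to showing that the coefficient of $t^n$ in $b_t(\langle\alpha\rangle)$ equals
\begin{itemize}
\item $\tau+1-u$ for $n\equiv 2\pmod 4$,
\item $-(\tau+1-u)$ for $n\equiv 3\pmod 4$,
\item $1-u$ for $n\equiv 0\pmod 4$,
\item $u-1$ for $n\equiv 1\pmod 4$,
\end{itemize}
in each case for $n\ge 2$, with the values for $n=0,1$ checked directly.

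Next I simplify each factor of the rational expression using these relations.
\begin{itemize}
\item Since $u^2=1$, we have $1/(1-ut)=(1+ut)/(1-t^2)$, so the factor $(1-ut^2)/(1-ut)$ becomes $(1-ut^2)(1+ut)/(1-t^2)$.
\item The numerator factor $(1+\tau)t^2-2t+1$ is $(1-t)^2+\tau t^2$.
\item The denominator factor is $D+\tau E$ with $D=(1-t^2)^2$ and $E=t^4$.
\end{itemize}
Because $\tau^2=-4\tau$, the element $D+\tau E$ has the explicit inverse
\[
\frac1D-\frac{E}{D(D-4E)}\,\tau,
\]
which is verified by direct multiplication. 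Here $D-4E=(1-t^2)^2-4t^4=(1-3t^2)(1+t^2)$ has constant term $1$, so the inverse makes sense in $\GW(k)\fps{t}$.

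Multiplying everything out and reducing with $u^2=1$, $u\tau=-\tau$ and $\tau^2=-4\tau$, I get
\[
b_t(\langle\alpha\rangle)=P(t)+Q(t)\,u+R(t)\,\tau
\]
with $P,Q,R\in\Z\fps{t}$ rational functions. Before expanding, I collect over a common denominator.

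The main obstacle is the bookkeeping in this step. I expect the $(1+t)^2$ in the numerator to cancel against powers of $(1-t^2)$ in the denominators, so that $P$ and $Q$ end up with denominator $1-t^4=(1-t^2)(1+t^2)$. That is exactly what produces eventual $4$-periodicity with the sign pattern $(1,-1,1,-1)$ on the $u$- and $1$-parts. The factor $D-4E$ in the $\tau$-coefficient is worrying, since $1-3t^2$ would destroy periodicity. I therefore expect it to cancel against the numerator factor $(1-t)^2+\tau t^2$ once $\tau^2=-4\tau$ is applied: the $\tau$-part should collapse to $R(t)=t^2(1-t)/(1-t^4)$, up to the precise form. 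If that cancellation fails, I would instead work from \Cref{eqn:bt-in-terms-of-at} directly. I would expand $a_t(\langle\alpha\rangle)=(1+ut)/(1-t^2)+\tau t^2/(1-t)^3$, and invert $a_{t^2}(\langle\alpha\rangle)$ by the same $\tau^2=-4\tau$ trick. That route keeps the denominators as powers of $1-t$ and $1-t^2$, but still has to check that the $\tau$-part becomes periodic.

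Finally, I read off the coefficients of $t^n$ in $P$, $Q$ and $R$. I check $n=0,1$ by hand, giving $1$ and $u$. I then verify the four residue classes for $n\ge 2$ against the target values listed above.
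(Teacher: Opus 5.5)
\textbf{Gap: the relations on $\tau$ are too weak.} Your relations $u^2=1$, $u\tau=-\tau$, $\tau^2=-4\tau$ are correct, but they are not enough, and the cancellation you are counting on does not happen. Carrying out your own computation gives $[(1-t)^2+\tau t^2](D+\tau E)^{-1}=(1+t)^{-2}\bigl(1+\tau\,t^2(1+2t)/(D-4E)\bigr)$. Now multiply by the remaining factor $(1+t)^2(1-ut^2)(1+ut)/(1-t^2)$ and use $(1-ut^2)(1+ut)\tau=(1+t^2)(1-t)\tau$. You get
\[
b_t(\langle\alpha\rangle)=\frac{1+t+t^2+ut}{1+t}+\tau\,\frac{t^2(1+2t)}{(1+t)(1-3t^2)}.
\]
The first summand is fine: it contributes $1$, $u$, and $(-1)^n(1-u)$ for $n\ge 2$. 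But the factor $1-3t^2$ survives, and the $\tau$-coefficients for $n=2,\dots,7$ are $1,1,2,4,5,13$. Already at $n=3$ you obtain $u-1+\tau$, whereas the target is $\langle 2\alpha\rangle-\langle 2\rangle=u-1-\tau$.

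The fallback has the same problem. Inverting $a_{t^2}(\langle\alpha\rangle)$ with $\tau^2=-4\tau$ produces the denominator $(1-t^2)^3-4t^4(1+t^2)$, not powers of $1\pm t$. Expanding $a_t/a_{t^2}$ directly gives $b^3(\langle\alpha\rangle)=u-1+3\tau$. Your two routes give different answers, which shows that the relations you listed do not capture what is true in $\GW(k)$.

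\textbf{The missing ingredient: $\tau_\alpha$ is $2$-torsion.} Since $\mathrm{char}(k)\ne 2$, the map $(x,y)\mapsto(x+y,x-y)$ is an isometry $\langle 1,1\rangle\cong\langle 2,2\rangle$. Hence $2(\langle 2\rangle-1)=0$, and so $2\tau_\alpha=2(\langle 2\rangle-1)(1-u)=0$. In particular $\tau^2=0$ and $u\tau=\tau=-\tau$. The closed formula you start from already uses this: it amounts to $a_t(\langle\alpha\rangle)=\frac{(1-t)^2+\tau t^2}{(1-ut)(1-t)^2}$, which requires $u\tau=\tau$.

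With $2\tau=0$, only the parity of the $\tau$-coefficients matters. Coefficientwise modulo $2$,
\[
\frac{t^2(1+2t)}{(1+t)(1-3t^2)}\equiv\frac{t^2}{1+t+t^2+t^3}=\frac{t^2(1-t)}{1-t^4},
\]
so the $\tau$-part is $\tau$, $-\tau$, $0$, $0$ for $n\equiv 2,3,0,1\pmod 4$. This is exactly what your reformulated targets require. Your guessed $R(t)$ is therefore right only modulo $2$, and the argument closes once you add $2\tau_\alpha=0$. This is the kind of basic property of $\tau_\alpha$ from Pajwani--P\'al that the paper relies on when it cross-multiplies the periodic guess against the closed formula.
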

\begin{proof} Plugging in the formulas from \Cref{thm:PP}, and after some minor simplification, we get that
    \begin{align*}
        b_t(\langle \alpha \rangle) = \frac{(t+1)^2(\langle \alpha \rangle t^2-1 )( (1+ \tau_\alpha)t^2 - 2t + 1)}{( \langle \alpha \rangle t - 1)( (1+\tau_\alpha)t^4 - 2t^2 + 1)}.
    \end{align*}
    The power series with the periodic expression in the proposition is
    \begin{align*}
         & 1 + \langle \alpha\rangle t + t^2 \left( \frac{(\langle 2 \rangle - \langle 2\alpha \rangle) + (\langle 2\alpha\rangle - \langle 2 \rangle)t + (1- \langle \alpha \rangle)t^2 + (\langle \alpha \rangle -1 )t^3}{1-t^4} \right) \\
         & = \frac{t^4 + (1 + \langle \alpha \rangle)t^3 + (2 + \tau_\alpha)t^2 + (1+\langle \alpha \rangle)t + 1}{t^3 + t^2 + t + 1}
    \end{align*}
    Cross-multiplying this guess with the honest formula for $b_t(\langle \alpha \rangle)$, we get a complicated polynomial expression in $t$, but it is straightforward to see that all of the coefficients vanish using the basic properties of $\tau_\alpha$ established in \cite[\S3]{PajwaniPal}.
\end{proof}

\subsection{Computations with quadratically enriched binomial coefficients} In this section we provide a short reproof of \cite[Theorem~1.1]{ChenWickelgren}. As in that paper, we write $1$ to mean $\langle 1\rangle$ and $n$ to mean $n\langle 1 \rangle$ in $\GW(k)$ for $n\in \mathbb{Z}$.

\begin{theorem}[{\cite[1.1]{ChenWickelgren}}] Let $q$ be a prime power of odd order, and let $u\in \GW(\mathbb{F}_q)$ be the unique rank one form not equal to the identity. For any $n,j\ge 0$, we have
    \begin{align*}
        \binom{\mathbb{F}_{q^n}/\mathbb{F}_q}{j} = b^j \tr_{\mathbb{F}_{q^n}/\mathbb{F}_q} = \begin{cases}
                                                                                                 \binom{n}{j}                                           & n\text{ odd}   \\
                                                                                                 \binom{n-1}{j} + \binom{n-2}{j-2} + u \binom{n-2}{j-1} & n\text{ even}.
                                                                                             \end{cases}
    \end{align*}
\end{theorem}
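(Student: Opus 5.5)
The plan is to reduce everything to the multiplicativity of $b_t$ (axiom PL3) together with two base computations, $b_t(1)$ and $b_t(u)$. Both base computations will be read off from \Cref{thm:main} applied to very small $G$-sets, so that the explicit formulas of \Cref{prop:periodicity-of-bn} are never needed. The equality $\binom{\mathbb{F}_{q^n}/\mathbb{F}_q}{j} = b^j\tr_{\mathbb{F}_{q^n}/\mathbb{F}_q}$ is exactly the interpretation $b_t(\tr_E)=\sum_j \binom{E/k}{j}t^j$ that follows from \Cref{thm:main}, with $L=\mathbb{F}_{q^n}$, $G=\Z/n$ and $X=G/e$. So what remains is to compute $b_t(\tr_{\mathbb{F}_{q^n}/\mathbb{F}_q})$.

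\emph{Step 1: identify the trace form.} Over a finite field of odd characteristic, a nondegenerate form is classified by its rank and its discriminant in $\mathbb{F}_q^\times/(\mathbb{F}_q^\times)^2\cong \Z/2$. The discriminant of the trace form equals the discriminant of the \'etale algebra. It is a square exactly when the Galois action on the $n$ embeddings lands in the alternating group. Frobenius acts on the embeddings as an $n$-cycle, which is even iff $n$ is odd. Hence
\[
\tr_{\mathbb{F}_{q^n}/\mathbb{F}_q}= n \quad (n\text{ odd}),\qquad \tr_{\mathbb{F}_{q^n}/\mathbb{F}_q}=(n-1)+u \quad (n\text{ even}).
\]
In the even case we use that $(n-1)\langle1\rangle+u$ has rank $n$ and nonsquare discriminant.

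\emph{Step 2: base cases.} Apply \Cref{thm:main} to the one-point $G$-set, whose Dress image is $1$. Since $\epsilon_t(\mathrm{pt})=1+t$, we get $b_t(1)=1+t$. Next take $n=2$ and $X=C_2/e$. Here $\epsilon^0(X)=\epsilon^2(X)=C_2/C_2$, $\epsilon^1(X)=X$, and all higher $\epsilon^j$ vanish. By Step 1, $D(X)=\tr_{\mathbb{F}_{q^2}}=1+u$, so
\[
b_t(1+u)=1+(1+u)t+t^2 .
\]
Dividing by $b_t(1)$ (using PL3 in the invertible power series ring) gives
\[
b_t(u)=\frac{1+(1+u)t+t^2}{1+t}.
\]

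\emph{Step 3: assemble.} By PL3, for $n$ odd we have $b_t(n)=(1+t)^n$, whose $t^j$ coefficient is $\binom{n}{j}$. For $n$ even,
\[
b_t\bigl((n-1)+u\bigr)=(1+t)^{n-1}\,\frac{1+(1+u)t+t^2}{1+t}=(1+t)^{n-2}\bigl((1+t)+ut+t^2\bigr).
\]
Extracting the $t^j$ coefficient gives $\binom{n-1}{j}+u\binom{n-2}{j-1}+\binom{n-2}{j-2}$, as claimed.

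The only step carrying real content is Step 1, the discriminant computation and the classification of forms over $\mathbb{F}_q$. I would cite the standard facts here rather than reprove them: the cyclic-Frobenius/parity argument and classification by rank and discriminant. Everything else is formal manipulation of PL3.
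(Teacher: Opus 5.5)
Your proposal is correct, but it computes $b_t(u)$ differently from the paper. The paper expands $b^j((n-1)+u)=\sum_i\binom{n-1}{i}b^{j-i}(u)$ coefficient by coefficient. It feeds in the values $b^m(u)=(-1)^m(1-u)$ for $m\ge 2$, which come from \Cref{prop:periodicity-of-bn} (itself resting on the Pajwani--P\'al rank-one formula for $a_t$) together with the vanishing of $\tau_\alpha$ over $\mathbb{F}_q$. It then needs the alternating-sum identity $\sum_{i=0}^{j-2}(-1)^i\binom{n-1}{i}=(-1)^j\binom{n-2}{j-2}$ and Pascal's rule to reach the stated form.

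You instead get $b_t(u)$ in closed form from structure alone. You apply \Cref{thm:main} to $C_2/e$ for $\mathbb{F}_{q^2}/\mathbb{F}_q$, which is the same observation as \Cref{exa:bt-quadratic-algebra}, and divide by $b_t(1)=1+t$. The answer is then read off from $(1+t)^{n-2}\bigl((1+t)+ut+t^2\bigr)$ with no summation identity. Expanding your $b_t(u)$ gives $1+ut+\sum_{m\ge 2}(-1)^m(1-u)t^m$, which matches the paper's values.

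What your route buys:
\begin{itemize}
\item It avoids the explicit rank-one formula and the periodicity proposition, whose proof in the paper is only sketched. You still depend on Pajwani--P\'al, but only through \Cref{thm:main}.
\item You justify the trace-form identification via the discriminant and the parity of an $n$-cycle, which the paper simply asserts.
\end{itemize}

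What the paper's route buys: it specializes a formula valid for every rank-one form over every field. Your trick depends on isolating $u$ as $\tr_{\mathbb{F}_{q^2}/\mathbb{F}_q}-1$. Over a general field, a quadratic algebra only determines the product $b_t(\langle 2\rangle)\,b_t(\langle 2a\rangle)$.
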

\begin{proof} If $n$ is odd, then the trace form of the extension $\mathbb{F}_{q^n}/\mathbb{F}_q$ is $n$, hence this follows directly from induction on the expression $b_t(1) = 1+t$. If $n$ is even, the trace form is $(n-1) + u$. Using PL3, we have
    \begin{align*}
        b^j((n-1) + u ) & = \sum_{i=0}^j b^i(n-1)b^{j-i}( u ) = \sum_{i=0}^j \binom{n-1}{i}b^{j-i}(u).
    \end{align*}
    Since over a finite field $\langle 1,2\alpha\rangle = \langle 2,\alpha \rangle$ for any $\alpha \in \mathbb{F}_q^\times$, we have that the periodicity on $b^n$ from \Cref{prop:periodicity-of-bn} becomes twofold periodicity for $n\ge 2$. With that in mind, the formula above becomes:
    \begin{align*}
        \binom{n-1}{j} + \binom{n-1}{j-1}u + (-1)^j (1-u)\sum_{i=0}^{j-2} (-1)^i \binom{n-1}{i}.
    \end{align*}
    We can use the binomial identity $\sum_{i=0}^{j-2} (-1)^i \binom{n-1}{i} = (-1)^{j-2}\binom{n-2}{j-2}$. Plugging this in above, we get
    \begin{align*}
        \binom{n-1}{j} + \binom{n-1}{j-1}u + (1-u)\binom{n-2}{j-2},
    \end{align*}
    which simplifies to the expression in the statement of the theorem by Pascal's identity.
\end{proof}

\subsection{The \texorpdfstring{$b_t$}{bt} as Grothendieck--Witt invariants}

Letting $\Et_n(k)$ denote the set of isomorphism classes of \'etale $k$-algebras of degree $n$, we can consider $\Et_n$ as a functor from fields to sets. Similarly, the Grothendieck--Witt ring and the Witt ring can be viewed as functors from fields to sets.

\begin{notation}
We denote by $\Inv^{\GW}_k(n)$ the set of natural transformations
\[
    \Et_n \Rightarrow \GW
\]
of functors $\Field_{k/} \to \Set$, and we write $\Inv_k(n)$ to be the set of natural transformations $\Et_n \Rightarrow \operatorname{W}$. We call these the sets of \emph{Grothendieck--Witt invariants} and \emph{Witt invariants}, respectively, and note that each has a natural (pointwise) ring structure.
\end{notation}

Reducing modulo the hyperbolic form yields a natural transformation $\GW \Rightarrow \operatorname{W}$, which induces a ring homomorphism
\[
    \Inv_k^{\GW}(n) \to \Inv_k(n).
\]
\begin{proposition} For any $n$ and any field $k$, there is a natural isomorphism
\[
    \Inv_k^{\GW}(n) \xto{\sim} \Inv_k(n) \times_{\Z/2} \Z,
\]
given by reduction modulo $h$ and rank.
\end{proposition}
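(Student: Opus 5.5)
We need to prove that $\Inv_k^{\GW}(n) \to \Inv_k(n) \times_{\Z/2} \Z$, given by $F \mapsto (\bar F, \operatorname{rk} F)$, is a bijection. Here the fiber product is formed over the mod-$2$ rank on $\operatorname{W}$ (dimension index $e_0\colon \operatorname{W}\to\Z/2$) and reduction $\Z\to\Z/2$.

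The plan is to work fieldwise. For every field $K$ there is a pullback square of rings
\[
\GW(K) \xto{\sim} \operatorname{W}(K)\times_{\Z/2}\Z,
\]
which we then promote to natural transformations. The pullback claim is standard. The kernel of $\GW(K)\to \operatorname{W}(K)$ is $\Z\cdot h$, and the rank of $h$ is $2$, so the map is injective. For surjectivity, given a Witt class $w$ represented by a form $q$ of rank $r$, and an integer $m\equiv r \pmod 2$, the element $q + \tfrac{m-r}{2}h$ lies in $\GW(K)$ and has rank $m$.

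Next, check that the map of invariants is well defined and lands in the fiber product. Composing $F\colon \Et_n\Rightarrow\GW$ with the natural transformations $\GW\Rightarrow \operatorname{W}$ and $\operatorname{rk}\colon\GW\Rightarrow\Z$ gives natural transformations. These agree after reduction to $\Z/2$ because the square commutes fieldwise. One subtlety needs care: the rank component is a natural transformation $\Et_n\Rightarrow\Z$ to the constant functor, i.e.\ an element of $\Inv^{\Z}_k(n)$, which need not be a single integer. So the target should be read as the fiber product of invariant sets $\Inv_k(n)\times_{\Inv^{\Z/2}_k(n)}\Inv^{\Z}_k(n)$, and the proof will show the isomorphism in that form. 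One could additionally show rank invariants are constant whenever $\Et_n$ is ``connected'', e.g.\ via the generic point, since every étale algebra of degree $n$ over $K$ is a specialization of the generic one over a field extension and all become split over $\bar K$. Identifying with $\Z$ then uses that an invariant valued in the constant functor is determined by its value on the split algebra $k^n$ after passing to a common extension. I would handle this by noting that any $E\in\Et_n(K)$ and the split algebra become isomorphic over a separable closure $K^s$, and naturality along $K\to K^s$ forces equal integer values. This step is where I expect the main care to be needed, and it is the concrete argument I would use.

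Finally, bijectivity follows formally because limits of functors are computed pointwise. The fiber product of natural transformations into $\operatorname{W}$ and $\Z$ over $\Z/2$ is a natural transformation into the pointwise fiber product functor, which is $\GW$. Hence $\Nat(\Et_n,-)$ preserves the pullback, giving the bijection. Ring structures are pointwise, so the bijection is a ring isomorphism.
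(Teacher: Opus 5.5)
Your proposal is correct and takes essentially the same route as the paper: the natural fieldwise pullback $\GW \cong \operatorname{W}\times_{\Z/2}\Z$, then the fact that $\Nat(\Et_n,-)$ preserves this limit, then the separable-closure naturality argument. That last step shows an invariant valued in a constant functor $\underline{S}$ is determined by its value $\eta_k(k^n)$, which is how the paper identifies $\Inv^{\Z}_k(n)$ and $\Inv^{\Z/2}_k(n)$ with $\Z$ and $\Z/2$. Your aside about generic points is unnecessary, since the separable-closure argument combined with naturality along $k\to K$ already suffices.
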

\begin{proof} Recall that $\GW(k) \cong \operatorname{W}(k) \times_{\Z/2} \Z$, naturally in the field $k$. In other words, we have an isomorphism of functors $\GW \cong \operatorname{W} \times_{\underline{\Z/2}} \underline{\Z}$, where an underline denotes a constant functor. As $\Nat(\Et_n,-)$ is limit-preserving, it suffices to argue that $\Nat(\Et_n, \underline{S}) \cong S$ for an arbitrary set $S$. Take any $\eta \colon \Et_n \to \underline{S}$. Pick a field $K \in \Field_{k/}$ and $E\in \Et_n(K)$. Passing to a separable closure, we have that $E\otimes_K K^s \cong (K^s)^n$, so by naturality we have that $\eta_K(E) = \eta_{K^s}(E\otimes_K K^s) = \eta_{K^s}((K^s)^n)$. Since $k$ is initial in $\Field_{k/}$ we can see that we must have that $\eta_{K^s}((K^s)^n) \cong \eta_k(k^n)$. Therefore evaluation at $k^n$ gives us a bijection
$\mathrm{ev}_{k^n} \colon \Nat(\Et_n, \underline{S}) \xto{\sim} S.$
\end{proof}

With this in mind, we can rephrase a result of Serre to look at Grothendieck--Witt invariants rather than Witt invariants.

\begin{theorem}[{\cite[\S29]{garibaldiCohomologicalInvariantsGalois2003}}] For any field $k$ and any $n$, we have that $\Inv^{\GW}_k(n)$ is a free $\GW(k)$-module on $(\lfloor \frac{n}{2}\rfloor + 1)$-many generators
\begin{align*}
    \Inv^{\GW}_k(n) \cong \GW(k) \left\{\varepsilon^0 \Tr, \varepsilon^1 \Tr, \ldots, \varepsilon^{\lfloor n/2\rfloor}\Tr\right\},
\end{align*}
where $\varepsilon^i : \GW \Rightarrow \GW$ are the exterior power operations and $\Tr : \Et_n \Rightarrow \GW$ is the trace form.
\end{theorem}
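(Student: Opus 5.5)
The plan is to deduce this from Serre's theorem on Witt invariants together with the fibre product decomposition $\Inv_k^{\GW}(n) \cong \Inv_k(n) \times_{\Z/2} \Z$ proved just above. Serre's result \cite[29.2]{garibaldiCohomologicalInvariantsGalois2003} says that $\Inv_k(n)$ is a free $\operatorname{W}(k)$-module with basis $w^0\Tr, \ldots, w^{\lfloor n/2\rfloor}\Tr$, where $w^i$ are the exterior power operations on $\operatorname{W}$ (induced from McGarraghy's $\varepsilon^i$). So the task is to lift this statement across the fibre product.

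\textbf{Step 1 (images of the proposed generators).} Compute the image of each $\varepsilon^i\Tr$ under $\Inv^{\GW}_k(n) \to \Inv_k(n)\times_{\Z/2}\Z$. Reduction mod $h$ gives $w^i\Tr$. The rank is the constant $\binom{n}{i}$, since $\Tr$ has rank $n$ and $\varepsilon^i$ of a rank $n$ form has rank $\binom{n}{i}$.

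\textbf{Step 2 (spanning).} Let $\eta \in \Inv^{\GW}_k(n)$ have reduction $\bar\eta = \sum_i c_i\, w^i\Tr$ with $c_i \in \operatorname{W}(k)$. I will lift each $c_i$ to some $\tilde c_i \in \GW(k)$ and set $\eta' = \sum_i \tilde c_i \varepsilon^i\Tr$. Then $\eta - \eta'$ reduces to $0$ in $\Inv_k(n)$, so by the fibre product it corresponds to $(0,m)$ for an integer $m$ with $m$ even. The invariant $(0,m)$ is exactly $\tfrac{m}{2}h$ viewed as a constant invariant, and $h = h\cdot\varepsilon^0\Tr$ with $h \in \GW(k)$. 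Hence $\eta$ lies in the $\GW(k)$-span of the $\varepsilon^i\Tr$.

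\textbf{Step 3 (freeness).} Suppose $\sum_i a_i\varepsilon^i\Tr = 0$ with $a_i \in \GW(k)$. Reducing mod $h$ and using Serre's freeness gives $\bar a_i = 0$ for every $i$, so $a_i = m_i h$ with $m_i \in \Z$. The rank component then reads $\sum_i 2m_i\binom{n}{i} = 0$, which alone does not force $m_i = 0$. To conclude, evaluate the invariant at the split algebra $k^n$, and at suitable other étale algebras over extension fields, remembering that $h\cdot q = \operatorname{rank}(q)\, h$. This yields the single constraint $\sum_i m_i \binom{n}{i} = 0$ and nothing more.

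This is where I expect the main obstacle. Since $h$ annihilates $I(k)$, the elements $h\varepsilon^i\Tr = \binom{n}{i}h$ are all constant multiples of $h$, so for $n\ge 2$ it seems $\binom{n}{1}h\varepsilon^0\Tr - h\varepsilon^1\Tr = 0$ is a nontrivial relation, and the module would not be free in the naive sense.

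I would first check carefully whether the intended statement uses a modified basis, for instance generators $\varepsilon^i\Tr - \binom{n}{i}$ for $i\ge1$ together with $1$, or whether freeness should be read over $\GW(k)$ modulo this subtlety. The honest route is to prove the fibre product description $\Inv^{\GW}_k(n) \cong \bigoplus_i \operatorname{W}(k)\, w^i\Tr \times_{\Z/2}\Z$ and then exhibit a basis adapted to the torsion-free rank part. Reconciling this with the stated form of the theorem is the step I would scrutinize most.
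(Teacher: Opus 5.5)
Your Steps 1 and 2 follow the paper's route exactly. The paper's entire proof is ``Serre's theorem for $\Inv_k(n)$, then the fibre product $\Inv^{\GW}_k(n)\cong\Inv_k(n)\times_{\Z/2}\Z$'', and your spanning argument is correct. Your Step 3 objection is also correct. It is not a gap in your reasoning but an error in the statement itself.

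In every $\GW(K)$ one has $hq=\operatorname{rank}(q)\,h$, and $\varepsilon^i\Tr$ has constant rank $\binom{n}{i}$. Hence $h\cdot\varepsilon^i\Tr=\binom{n}{i}h\cdot\varepsilon^0\Tr$ in $\Inv^{\GW}_k(n)$. As soon as $n\ge 2$, this is a relation with nonzero coefficients in $\GW(k)$.

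The paper's one-line deduction silently assumes that pulling back along $\Z\to\Z/2$ turns a free $\operatorname{W}(k)$-module of rank $m+1$ into a free $\GW(k)$-module of rank $m+1$, where $m=\lfloor n/2\rfloor$. Your computation shows exactly why this fails. The kernel of $\GW(k)^{m+1}\to\Inv^{\GW}_k(n)$, $(a_i)\mapsto\sum a_i\varepsilon^i\Tr$, is $\{(c_ih)_i : c_i\in\Z,\ \sum_i c_i\binom{n}{i}=0\}\cong\Z^{m}$. So the theorem as stated holds only for $n\le 1$.

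Do not look for a modified basis, though: no basis of size $\ge 2$ exists. Every invariant has constant rank, by the constant-functor argument in the preceding proposition. So $h\cdot\Inv^{\GW}_k(n)\subseteq\Z h\cdot 1$ is cyclic. A free $\GW(k)$-module $M$ of rank $r$, by contrast, has $hM\cong\Z^r$, because $h\GW(k)=\Z h\cong\Z$.

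In particular, your candidates $1$ and $\varepsilon^i\Tr-\binom{n}{i}$ are not free generators either, since $h$ kills the latter. For $k$ algebraically closed, $\GW(k)=\Z$ and $\Tr-n$ is a nonzero $2$-torsion invariant (evaluate on $K(\sqrt{t})\times K^{n-2}$ with $K=k(t)$). So there the module is not free at all.

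What your ``honest route'' does prove is the corrected statement
\[\Inv^{\GW}_k(n)=\GW(k)\cdot 1\ \oplus\ \bigoplus_{i=1}^{m}\operatorname{W}(k)\cdot\Bigl(\varepsilon^i\Tr-\binom{n}{i}\Bigr)\cong \GW(k)\oplus\operatorname{W}(k)^{m}.\]
Existence is your Step 2, rewritten as $\sum a_i\varepsilon^i\Tr=(\sum_i a_i\binom{n}{i})\cdot 1+\sum_{i\ge1}a_i(\varepsilon^i\Tr-\binom{n}{i})$. Uniqueness follows by reducing mod $h$, applying Serre's freeness to kill the $\operatorname{W}(k)$-coefficients, and then evaluating the remaining constant at $k^n$.

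One small correction to Step 1: $\varepsilon^i$ does not descend to an operation on $\operatorname{W}$ (e.g.\ $\varepsilon^2(h)=\langle-1\rangle$). So $w^i\Tr$ should be defined as the reduction of the invariant $\varepsilon^i\Tr$, which is the sense in which Serre's basis is meant.
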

\begin{proof}
The theorem of Serre gives the same statement for $\Inv_k$. The result for $\GW$ then follows from the previous proposition.
\end{proof}

A key idea in the proof of Serre's theorem is that any Grothendieck--Witt invariant is determined by its values on multiquadratic algebras \cite[\S29]{garibaldiCohomologicalInvariantsGalois2003}, by which we mean algebras isomorphic to
\[A = K_{a_1}\times \cdots \times K_{a_m}\times K^\epsilon,\]
where $a_i \in K$, $K_a := K[x]/(x^2-a)$, $m = \lfloor n/2 \rfloor$ and $\epsilon = n-2m \in \{0,1\}$. Thus we can describe other potential bases for $\Inv_k^{\GW}(n)$ by their values on multiquadratic extensions. It is convenient to express these bases in terms of generating functions. We have different bases from \cite{brugalleWelschingerWittInvariants2025}, defined on multiquadratic algebras $A$ as follows:
\begin{enumerate}
    \item The $\lambda_j$'s, defined as the $j$th exterior power applied to the trace form of $A$. The generating function is
    \[
    \lambda_t(A) = (1+t)^\epsilon (1+\langle 2 \rangle t)^m \prod_{j=1}^m (1 + \langle 2 a_j\rangle t).
    \]
    \item The $\beta_j$'s, defined as the $j$th elementary symmetric polynomial of the trace form of the $K_{a_j}$'s. The generating function is
    \[
        \beta_t(A) = \prod_{j=1}^m (1 + \Tr(K_{a_j})t) = \prod_{j=1}^m (1 + \langle 2,2a_j\rangle t)
    \]
    \item The $\alpha_j$'s, defined as the $j$th elementary symmetric polynomial on the $\langle a_j\rangle$'s. Its generating function is
    \[
        \alpha_t(A) = \prod_{j=1}^m (1 + \langle a_j \rangle t).
    \]
    \item The $\chi_j$'s, defined as the trace forms of the $j$th symmetric powers of $A$.
\end{enumerate}
All of these extend to well-defined Witt invariants and yield bases of $\Inv_k(n)$ by \cite[2.5]{brugalleWelschingerWittInvariants2025} (and \cite[Appendix~A]{brugalleWelschingerWittInvariants2025} for the $\chi_j$'s).

Here we can apply our results to the idea floated in \cite[Remark~A.2]{brugalleWelschingerWittInvariants2025} to produce another basis for the ring of Grothendieck--Witt invariants.

\begin{definition}\label{def:Bj-GW-invariants}
For any $j\ge 0$, we define $B_j \in \Inv_k^{\GW}(n)$ to be $B_j := b^j\circ \Tr$, where $\Tr$ is the trace form and $b^j$ is as in \Cref{def:bt-power-structure}. This is the $j$th \emph{quadratically enriched binomial coefficient}.
\end{definition}

\begin{example}\label{exa:bt-quadratic-algebra}
Let $k_a = k[x]/(x^2-a)$ be a quadratic \'etale algebra over a field $k$. Then the generating function for the $B_j$'s satisfies:
\[
b_t (\Tr(k_a)) = 1 + \Tr(k_a)t + t^2 \in \GW(k)\fps{t}.
\]
\end{example}
\begin{proof} The first two coefficients are immediate, and the last follows from commuting $b_t$ with the trace form via \Cref{thm:main}.
\end{proof}

\begin{theorem}\label{thm:Bjs-basis-of-inv-gw}
Let $k$ be any field of characteristic $\ne2$, and $n\ge1$ any integer. Then we have an isomorphism of $\GW(k)$-modules
\begin{align*}
    \Inv^{\GW}_k(n) \cong \GW(k) \left\{ B_0,\ldots, B_{\lfloor n/2 \rfloor} \right\},
\end{align*}
where $B_j$ is the $j$th quadratically enriched binomial coefficient.
\end{theorem}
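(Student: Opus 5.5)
Since the previous theorem shows that $\Inv^{\GW}_k(n)$ is free over $\GW(k)$ on the $\lfloor n/2\rfloor+1$ invariants $\varepsilon^0\Tr,\ldots,\varepsilon^{\lfloor n/2\rfloor}\Tr$, and any invariant is determined by its values on multiquadratic algebras, I will compare the $B_j$ with the basis $\beta_0,\ldots,\beta_m$ (with $m=\lfloor n/2\rfloor$). By \cite[2.5]{brugalleWelschingerWittInvariants2025} together with the proposition $\Inv^{\GW}_k(n)\cong \Inv_k(n)\times_{\Z/2}\Z$, the $\beta_j$ form a $\GW(k)$-basis as well. The goal is to show that the transition matrix from $(\beta_j)$ to $(B_j)$ is unitriangular with entries in $\GW(k)$ (in fact in $\Z$). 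It then follows that $B_0,\ldots,B_m$ is also a basis.

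The key computation is $b_t(\Tr A)$ for a multiquadratic algebra $A=K_{a_1}\times\cdots\times K_{a_m}\times K^\epsilon$. The trace form of a product is the sum of the trace forms, so PL3 for $b_t$ (\Cref{prop:bt-is-pre-l-structure}) gives
\[
b_t(\Tr A)=b_t(\langle1\rangle)^\epsilon\prod_{j=1}^m b_t(\Tr K_{a_j}).
\]
By \Cref{exa:bt-quadratic-algebra}, $b_t(\Tr K_{a_j})=1+\Tr(K_{a_j})t+t^2$. From \Cref{prop:periodicity-of-bn} with $\alpha=1$ we get $\tau_1=0$, hence $b_t(\langle 1\rangle)=1+t$; alternatively this follows from \Cref{thm:main} applied to the one-point set. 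Therefore
\[
b_t(\Tr A)=(1+t)^\epsilon\prod_{j=1}^m\bigl(1+t^2+\Tr(K_{a_j})t\bigr).
\]
Expanding the product, the coefficient of $t^i$ is a $\Z$-linear combination of the elementary symmetric polynomials $e_r(\Tr K_{a_1},\ldots,\Tr K_{a_m})=\beta_r(A)$ with $r\le i$. The coefficient of $\beta_i(A)$ is $1$: only the terms choosing $\Tr(K_{a_j})t$ exactly $i$ times, with no $t^2$ factors and no factor $t$ from $(1+t)^\epsilon$, contribute to $e_i t^i$. Explicitly,
\[
\prod_j(1+t^2+x_jt)=\sum_r e_r(x)\,t^r(1+t^2)^{m-r},
\]
so
\[
B_i=\sum_{r\le i} c_{i,r}\beta_r,\qquad c_{i,r}\in\Z,\ c_{i,i}=1.
\]
These coefficients do not depend on $A$, $K$, or the $a_j$.

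Since an invariant is determined by its values on multiquadratic algebras, the identity $B_i=\sum_{r\le i}c_{i,r}\beta_r$ holds in $\Inv^{\GW}_k(n)$ for all $i$. This requires $B_i$ to be a genuine natural transformation, which holds because $b^i$ is a natural operation on $\GW$: it is built from $a_t$, which is defined by functorial formulas. Restricting to $0\le i\le m$ gives a unitriangular integer matrix, which is invertible over $\Z$ and hence over $\GW(k)$. So $B_0,\ldots,B_m$ is a basis, and the theorem follows.

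The main obstacle is the reduction step. One must check that the cited basis result for the $\beta_j$ lifts from Witt invariants to Grothendieck--Witt invariants, and that "determined by values on multiquadratic algebras" applies to $\GW$-valued invariants; both should follow from the fiber-product description and the constancy of rank. A fallback is to compare directly with the $\varepsilon^j\Tr$. On multiquadratic algebras, $\lambda_t(A)=(1+t)^\epsilon\prod_j(1+\langle2\rangle t)(1+\langle 2a_j\rangle t)$, and each factor $(1+\langle2\rangle t)(1+\langle2a_j\rangle t)=1+\Tr(K_{a_j})t+\langle a_j\rangle t^2$. So the comparison reduces to showing that the $\langle a_j\rangle$-terms can be triangularly expressed in terms of the $\beta$'s, which is exactly what the cited \cite[2.5]{brugalleWelschingerWittInvariants2025} provides.
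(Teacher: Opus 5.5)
Your computation is the paper's proof. You use the product formula $b_t(\Tr A)=(1+t)^\epsilon\prod_j(1+\Tr(K_{a_j})t+t^2)=(1+t)^\epsilon\sum_r\beta_r(A)\,t^r(1+t^2)^{m-r}$ and read off an integral unitriangular transition matrix to the $\beta_j$. That part is correct, as are your remarks on naturality of $b^j$ and on $b_t(\langle 1\rangle)=1+t$.

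The gap is the step you flag as the main obstacle, and it cannot be filled: freeness does not lift from $\Inv_k(n)$ to $\Inv^{\GW}_k(n)$. Since $\langle a,-a\rangle\cong h$, every $\GW(K)$ satisfies $h\cdot x=\operatorname{rank}(x)\,h$. Hence $h\,B_1=h\Tr=nh=nh\cdot B_0$ in $\Inv^{\GW}_k(n)$, which is a nontrivial $\GW(k)$-linear relation once $n\ge 2$. The same relation destroys independence of $\beta_0,\beta_1$ and of $\varepsilon^0\Tr,\varepsilon^1\Tr$, so your fallback fails too.

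Concretely, take $k=\C$, so $\GW(\C)=\Z$. The invariant $B_1-nB_0$ is nonzero: its value on $\C(x)(\sqrt{x})\times\C(x)^{n-2}$ is $\langle x\rangle-\langle 1\rangle$. Yet $2(B_1-nB_0)=h\Tr-nh=0$, because $\langle 1,1\rangle\cong h$ over any field containing $\sqrt{-1}$. So $\Inv^{\GW}_\C(n)$ has torsion and is not free over $\GW(\C)$ at all. The statement, read as freeness on $B_0,\ldots,B_{\lfloor n/2\rfloor}$, is therefore false for every $n\ge 2$; only $n=1$ survives.

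The paper's proof is the same argument and has the same hole. It takes freeness from its preceding theorem, whose one-line deduction from $\Inv^{\GW}_k(n)\cong\Inv_k(n)\times_{\Z/2}\Z$ is exactly this lifting step. What the fiber product does give is that the kernel of $\Inv^{\GW}_k(n)\to\Inv_k(n)$ is $\Z h$, the constant invariants $Nh$, and this lies in $\GW(k)B_0$. So lifting a $W(k)$-basis yields generation, but not independence. The relations among the $B_j$ are exactly $\sum_j N_jh\,B_j=0$ with $\sum_j N_j\binom{n}{j}=0$.

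Combined with your triangular computation, the correct conclusion is this: $B_0,\ldots,B_m$ generate $\Inv^{\GW}_k(n)$, their reductions form a $W(k)$-basis of $\Inv_k(n)$, and
\[
\Inv^{\GW}_k(n)=\GW(k)\,B_0\oplus\bigoplus_{j=1}^{m}W(k)\,\Bigl(B_j-\binom{n}{j}B_0\Bigr)\cong\GW(k)\oplus W(k)^{m}.
\]
Here the rank-zero invariants $B_j-\binom{n}{j}B_0$ are annihilated by $h$.
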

\begin{proof} In order to prove the result, we evaluate on a multiquadratic extension, and compute the matrix relating the $B_j$'s with the $\beta_j$'s and argue it is invertible (in fact, invertible over the integers).

On a multiquadratic extension $A = K_{a_1} \times \cdots \times K_{a_m} \times K^\epsilon$, with $m = \lfloor n/2\rfloor$ and $\epsilon = n-2m \in \{0,1\}$, we can rearrange the generating function for the $\beta$-basis:
\begin{align*}
    \beta_t(A) &= \prod_{i=1}^m (1 + \Tr(K_{a_i})t) = \prod_{i=1}^m (1 + \langle 2\rangle (1 + \langle a_i \rangle)t).
\end{align*}
Writing out the generating function for the binomial coefficients on a multiquadratic algebra, we observe that we can rewrite it in terms of $\beta_t$ as follows:
\begin{align*}
    b_t(\Tr(A)) &= (1+t)^\epsilon \prod_{j=1}^m \left(1 + \Tr(K_{a_j}) t + t^2\right) \\
    &= (1+t)^\epsilon \prod_{j=1}^m (1 + t^2)\left(1 + \Tr(K_{a_j}) \frac{t}{1+t^2}\right) \\
    &= (1+t)^\epsilon (1+t^2)^{m} \prod_{j=1}^m \left(1 + \Tr(K_{a_j}) \frac{t}{1+t^2}\right) \\
    &= (1+t)^\epsilon (1+t^2)^{m} \beta_{t/(1+t^2)} \\
    &= (1+t)^\epsilon (1+t^2)^m \sum_{j=0}^\infty \beta_j(A) \left( \frac{t}{1+t^2}\right)^j \\
    &= (1+t)^\epsilon (1+t^2)^m \sum_{j=0}^\infty \beta_j(A) \left( t - t^3 + t^5 - \ldots \right)^j
\end{align*}
Therefore the $B_i$'s are expressible in terms of the $\beta_j$'s. If we write $B_i = \sum_j N_{ij} \beta_j$ for some integers $N_{ij}$, it is clear that $N_{ij} = 0$ for $j>i$ by consideration of degrees. Similarly by consideration of degree when $j=i$, we have that $N_{ii} = 1$. Therefore the matrix relating the $B_i$-basis to the $\beta_j$-basis is integral and lower-triangular with ones along the diagonal, hence invertible.
\end{proof}

\begin{remark} A direct formula for the change of basis between the $B_i$'s and the $\beta_j$'s can be derived from the proof of \Cref{thm:Bjs-basis-of-inv-gw}. Explicitly, we have that
\[
    B_t = \sum_{\substack{p,q \geq 0 \\ p+q \leq m \\ 0 \leq f \leq \varepsilon}} \binom{m-p}{q}\binom{\epsilon}{f} \beta_p t^{p+2q+f}.
\]
This comes from expanding the expression
\[
    B_t(A) = (1+t)^\epsilon \prod_{j=1}^m (1 + \Tr(K_{a_j})t + t^2).
\]
In the expanded product, we pick $p$ different linear terms, $q$ different quadratic terms, and potentially one linear term coming from $(1+t)^\epsilon$, depending on whether $\epsilon=1$. Summing over all choices, we obtain the $p$th elementary symmetric polynomial on the $\Tr(K_{a_j})$'s, which is precisely $\beta_p(A)$. The $q$ choices come from the complement of these $p$ choices, hence the binomial $\binom{m-p}{q}$.
\end{remark}

We can furthermore relate the $B_j$'s to the $\chi_j$'s, and they admit a very elegant relation:

\begin{proposition} Let $A$ be any multiquadratic algebra. Then we have an equality of generating functions
\[
    B_t(A) = \frac{\chi_t(A)}{\chi_{t^2}(A)}.
\]
\end{proposition}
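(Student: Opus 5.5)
The proposal is to move the identity $\epsilon_t = \sigma_t/\sigma_{t^2}$ on $A(G)$ (from the proof of \Cref{thm:main}) across to \'etale algebras, and then evaluate on trace forms. Here $\chi_t(A) = \sum_j \Tr(\Sym^j A)\, t^j$ is the trace-form generating function of the symmetric powers. We pick a finite Galois extension $L/K$ with group $G$ that splits $A$, and let $X$ be the corresponding finite $G$-set, so that $A$ corresponds to $X$ under the Galois correspondence. Then $\Sym^j A$ corresponds to the $G$-set $\sigma^j(X)$, and $\binom{X}{j} = \epsilon^j(X)$ corresponds to the algebra whose trace form is $B_j(A)$.

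Concretely, taking $\pi_0 D_{L/K}$ coefficientwise in $t$ gives
\[
\chi_t(A) = \pi_0 D_{L/K}(\sigma_t(X)), \qquad B_t(A) = \pi_0 D_{L/K}(\epsilon_t(X)).
\]
The marks computation in the proof of \Cref{thm:main} gives $\epsilon_t(X)\,\sigma_{t^2}(X) = \sigma_t(X)$ in $A(G)\fps{t}$. Applying the ring homomorphism $\pi_0 D_{L/K}$ coefficientwise to this identity yields $B_t(A)\,\chi_{t^2}(A) = \chi_t(A)$. Since $\chi_{t^2}(A)$ has constant term $1$, it is invertible in $\GW(K)\fps{t}$, and dividing by it gives the claim.

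The same argument can be phrased through \Cref{thm:PP}: there $\chi_t(A) = a_t(\Tr A)$, and then $B_t(A) = b_t(\Tr A) = a_t(\Tr A)/a_{t^2}(\Tr A)$ by \Cref{def:bt-power-structure}. Either route works. The argument never uses that $A$ is multiquadratic, so it holds for any \'etale algebra.

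The only delicate point is the identification $\Tr(\Sym^j A) = D(\sigma^j X)$. This requires checking that the $j$th symmetric power of an \'etale algebra, taken as the $\Sigma_j$-invariants of $A^{\otimes j}$, corresponds under the Galois correspondence to $X^{\times j}/\Sigma_j$. This holds because taking $\Sigma_j$-invariants is exact in characteristic $0$ and is compatible with base change to the splitting field. If that identification is treated as the definition of $\chi_j$, which is how \Cref{thm:PP} is used, the proof reduces to the two lines above.
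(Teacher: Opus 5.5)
Your proposal is correct and is essentially the paper's proof: the paper simply applies the Dress map to the Burnside-ring identity $\epsilon_t = \sigma_t/\sigma_{t^2}$, exactly as in your first route. Your second route, via $b_t = a_t/a_{t^2}$ and \Cref{thm:PP}, is the same argument unpacked.

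One small correction: the paper only assumes characteristic $\ne 2$, so exactness of invariants in characteristic $0$ is not the right justification for $\Tr(\Sym^j A) = D(\sigma^j X)$. Instead, use that $\Sigma_j$-invariants are a kernel and so commute with the flat base change $K \to K^s$, and that over $K^s$ the $\Sigma_j$-invariant functions on $X^{j}$ are exactly the functions on $X^{j}/\Sigma_j$, in any characteristic.
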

\begin{proof}
    This is exactly the relationship $\epsilon_t = \sigma_t / \sigma_{t^2}$ on the Burnside ring, followed by the Dress map.
\end{proof}

\begin{remark} While the coefficients on Welschinger--Witt invariants ``explode'' when written in the $\chi$-basis \cite[A.2]{brugalleWelschingerWittInvariants2025}, when written in the $B$-basis the coefficients appear to be a few orders of magnitude smaller. In particular we can rewrite Table 6 in the mentioned paper:
\begin{center}
\begin{tabular}{|r | p{6cm} | p{6cm}|}
    \hline
    $d$ & $\VW_d(\mathbb{P}^2)$ in $\chi$-basis & $\VW_d(\mathbb{P}^2)$ in $B$-basis \\
    \hline
    \hline
4 & $-2\chi_0 +2\chi_1 - \chi_2+\chi_3$

    & $-13B_0 + 3B_1+B_2 + B_3$
\\
\hline
	5& $-118\chi_0 -18\chi_1+ 18\chi_2+ \chi_3-\chi_4-\chi_5+\chi_6$

    & $-97B_0 - 60B_1 - 9B_2 + 9B_4 + 4B_5 + B_6$
    \\
        \hline
	   6& $-4474\chi_0 -9460\chi_1 + 6644\chi_2 + 828\chi_3-  2680\chi_4 + 836\chi_5 + 284\chi_6-236  \chi_7   +46\chi_8$

       & $2054 B_0 + 310 B_1 - 194 B_2 - 586 B_3 - 306 B_4 - 26 B_5 + 110 B_6 + 86 B_7  + 46 B_8$
       \\
       \hline
      7 & $-4519048\chi_0+  6205190 \chi_1  -448528\chi_2+  -2536404 \chi_3+  1184776\chi_4+  350017 \chi_5 -437346\chi_6 + 76967\chi_7 + 46664\chi_8 - 23786\chi_9 + 3336\chi_{10}$
      & $-50248 B_0 + 42 B_1 + 46160 B_2 + 35570 B_3 + 888 B_4 - 18177 B_5 - 16416 B_6 - 5223 B_7 + 3160 B_8 + 6238 B_9 + 3336 B_{10}$

	  \\
      \hline
      8 & $\approx 3.05 \cdot 10^9 \chi_0  -1.08\cdot 10^{10} \chi_1 +  3.59 \cdot 10^9\chi_2 +  2.94\cdot 10^9 \chi_3   -2.56 \cdot 10^9\chi_4 + 1.31 \cdot 10^8\chi_5 + 6.25 \cdot 10^8 \chi_6  -2.64\cdot 10^8 \chi_7 +  -1.48\cdot 10^7 \chi_8 + 4.16 \cdot 10^7\chi_9 -1.24 \cdot 10^7\chi_{10} +  1.29\cdot 10^6 \chi_{11}$
      & $4563970B_0-4602150B_1-6235818B_2-1498050B_3+2884542B_4+3320854B_5+888250B_6-1422942B_7-1943614B_8-952198B_9+418470B_{10}+1285806B_{11}$\\
\hline
\end{tabular}
\end{center}
We can observe the appearance of the unitriangular change of basis matrix in the fact that the leading coefficient is always the same.
\end{remark}

\printbibliography{}

\end{document}